\newif\ifskiptext\skiptextfalse
\def\@seccntformat#1{\csname the#1\endcsname.\hspace*{0.5em}}
\def\@maketitle{
  \newpage
  \null
  \vskip 2em%
  \begin{center}%
  \let \footnote \thanks
    {\LARGE \@title \par}%
    \vskip 1.5em%
    {\large
      \lineskip .5em%
      \begin{tabular}[t]{c}%
        \@author
      \end{tabular}\par}%
  \end{center}%
  \par
  \vskip 1.5em}
\def\citet{\citep} 
\def\citeP{\citep*} 
\newcommand{\EKhref}[2]{URL: \url{#1}}
\newtheorem{theorem}{Theorem}[section]
\newtheorem{prop}[theorem]{Proposition}
\newtheorem{cor}[theorem]{Corollary}
\newtheorem{lemma}[theorem]{Lemma}
\newtheorem{algorithm}
{Algorithm}[section]
{\theorembodyfont{\upshape} \newtheorem{remark}
{Remark}[section]}
\newtheorem{define}
{Definition}
{\theorembodyfont{\upshape} \newtheorem{example}
{Example}[section]
}
{Problem}[section]
\def\citet{\cite}
\newcommand{\comment}[1]{}
\def\cal{\mathcal}
\newcommand\Tr{\text{\upshape Tr}}
\newcommand{\ZZ}{{\mathbb Z}}
\newcommand{\NN}{{\mathbb N}}
\newcommand{\QQ}{{\mathbb Q}}
\newcommand{\RR}{{\mathbb R}}
\newcommand{\sdp}{semidefinite program\xspace}
\newcommand{\sdps}{semidefinite programs\xspace}
\newcommand{\sdping}{semidefinite programming\xspace}
\newcommand{\psd}{positive semidefinite\xspace}
\newcommand{\psdness}{positive semidefiniteness\xspace}
\let\nonnegative=\psd
\newcommand{\soses}{sums-of-squares\xspace}
\newcommand{\sos}{sum-of-squares\xspace}
\newcommand{\ess}[2]{{}_{#1,#2}}
\newcommand{\RSOS}[1]{\text{\upshape SOS}/\text{\upshape SOS}_{#1}} 
\newcommand{\RSOSdeg}[1]{\text{\upshape SOS}/\text{\upshape SOS}_{\text{\upshape
deg$\le${}$#1$}}}
\newcommand{\SOS}[1]{\text{\upshape SOS}_{#1}} 
\newcommand{\calT}[2]{\text{\upshape Terms}[{#1};\text{\upshape deg$\le$}{#2}\hspace{1pt}]} 
\DeclareMathSymbol{\bincircledS}     {\mathrel}{AMSa}{"73}
\newcommand{\symm}[2]{\mathbb{S}#1^{#2\times #2}}
\newenvironment{proof}{\textit{Proof. }}{\hfill $\square$}
\chardef\ttlb="7B 
\chardef\ttrb="7D 
\chardef\ttti="7E 
\title{%
Certificates of Impossibility of Hilbert-Artin Representations
of a Given Degree for Definite Polynomials and Functions\raisebox{0ex}{*}%
} 
\author{%
Feng Guo${}^{1,2}$,
Erich L. Kaltofen${}^1$,
and Lihong Zhi${}^2$
\\
\\
\small \llap{${}^1$}%
Dept.\ of Mathematics, North Carolina State University,
\\[-0.5ex]
\small Raleigh, North Carolina 27695-8205, USA
\\[-0.5ex]
\small {\ttfamily
kaltofen@math.ncsu.edu};
\url{http://www.kaltofen.us}
\\
\small \llap{${}^2$}%
Key Laboratory of Mathematics Mechanization, AMSS
\\[-0.5ex]
\small Beijing 100190, China
\\[-0.5ex]
\small {\ttfamily
\ttlb fguo,lzhi\ttrb @mmrc.iss.ac.cn};
\url{http://www.mmrc.iss.ac.cn/~lzhi/}
}
\begin{document}

\maketitle

\def\thefootnote{\fnsymbol{footnote}}
\footnotetext[1]{
\footnotesize
This material is based on work supported in part
by the National Science Foundation under Grants
CCF-0830347 and CCF-1115772 (Kaltofen),
and by NCSU as host for Guo's study abroad program.
\par
Feng Guo and Lihong Zhi are supported by  NKBRPC 2011CB302400 and
the Chinese National Natural Science Foundation under Grants
91118001, 60821002/F02, 60911130369 and 10871194.
}

\begin{abstract}\noindent 
We deploy numerical \sdping and conversion to exact rational inequalities
to certify that for a \psd input polynomial or rational function,
any representation as a fraction of \soses of polynomials
with real coefficients must contain polynomials in the denominator
of degree no less than a given input lower bound.
By Artin's solution to Hilbert's 17th problems, such representations always
exist for some denominator degree.  Our certificates of infeasibility 
are based on the generalization
of Farkas's Lemma to \sdping.

The literature has many famous examples of impossibility  
of SOS representability 
including Motzkin's, Robinson's, Choi's and Lam's polynomials, and 
Reznick's lower degree
bounds on uniform denominators, e.g., powers of the \sos of each variable.  Our
work on exact certificates for \psdness allows for non-uniform denominators,
which can have lower degree and are often easier to convert to exact
identities.  Here we demonstrate our algorithm by computing certificates of
impossibilities for an arbitrary \sos denominator of degree~$2$ and~$4$ for
some symmetric sextics in~$4$ and ~$5$ variables, respectively.  We can
also certify impossibility of base polynomials in the denominator of restricted
term structure, for instance as in Landau's reduction by one less variable.
\end{abstract} 


\section{Introduction}
\label{sec:intro}



The Farkas Lemma of linear programming can be employed to
construct certificates of infeasibility, in its simplest form
of an inconsistent system of linear equations \citeP{GLS98},
in linear programming of a system of linear inequalities,
and in \sdping of a system of linear equations
with semidefiniteness constraints on the solution.
A polynomial is not a sum-of-squares of polynomials (SOS)
if the corresponding semidefinite program is infeasible.
Thus the Farkas Lemma produces a certificate that a polynomial is not an SOS,
the separating hyperplane \citep{AhmadiParriloNotSOSC}.

Motivated by our SOS certificates for global optima of polynomials
and rational functions
\citeP{%
KLYZ09%
,KYZ09%
,HKZ10%
}
(see also
\citep{%
PW98%
,Harrison07%
,PePa08%
} for earlier work),
we extend those impossibility certificates to Hilbert-Artin
representations of a given denominator degree:  by Emil Artin's
Theorem \citep{Artin27}, every real \psd rational function is
a fraction of two \soses of polynomials. We write for an
$f(X_1,\ldots,X_n)\in K(X_1,\ldots,X_n)$, where $K \supseteq \QQ$
is a subfield of the real numbers,
$$
  f\succeq 0 \text{\quad if\quad}
  \forall \xi_1,\ldots,\xi_n\in\RR\colon f(\xi_1,\ldots,\xi_n) \not< 0.
$$
Note that at a real root of the denominator of $f$ its value is undefined,
hence $\not< 0$.  Artin's original theorem stipulates that
\begin{equation}\label{eq:artin}
\forall f \succeq 0\colon \exists u_1,\ldots,u_l,w\in K[X_1,\ldots,X_n]\colon
f = \frac{1}{w^2} \sum_{i=1}^l u_i^2.
\end{equation}
If $f$ is a polynomial $\in K[X_1,\ldots,X_n]$, 
one may eliminate
one variable from the denominator $w$, that is, construct from
(\ref{eq:artin}) a representation with $w_{\text{new}} \in
K[X_1,\ldots,X_{n-1}]$, which Artin in his 1927 paper attributes
to Edmund Landau.  The reduction can be accomplished with the same
number of $l_{\text{new}} = l$ squares, which in \citep{Raj93} is
attributed to J. W. S. Cassels.  In both constructions the degree
of $w_{\text{new}}$ is substantially larger than that of $w$.  As
is customary, if a \psd polynomial $f$ allows a representation
(\ref{eq:artin}) with $w = 1$, we shall call $f$ a
{\itshape\sos\/} (SOS).  In general, however, 
as already David Hilbert has shown in 1888,
\psd polynomials are not SOS \citep{Chesi07,Blek09}.

In order to minimize the numerator
and denominator degrees, we seek
\begin{equation}\label{eq:har}
u_1,\ldots,u_l,\>v_1,\ldots,v_{l^\prime}\in \RR[X_1,\ldots,X_n]
\text{ such that }
f = \frac{\sum_{i=1}^l u_i^2}{\sum_{j=1}^{l^\prime} v_j^2}.
\end{equation}
We shall call (\ref{eq:har}) a {\itshape Hilbert-Artin representation\/} of $f$, which
constitutes an SOS proof for $f \succeq 0$.  By allowing an SOS as the
denominator polynomial, one then can construct such proofs with a possibly
smaller degree than the common denominator $w^2$ in (\ref{eq:artin}){}.  For
instance, for the Motzkin polynomial $\max_j\{\deg(v_j)\} \le 1$ suffices in
(\ref{eq:har}), but $\deg(w) \le 1$ is impossible in (\ref{eq:artin})
\citeP[Section~1]{KLYZ09}.

It is not known if minimal degree denominator SOSes can always have
coefficients in $K$, as is the case in Artin's original theorem
(\ref{eq:artin}){}.  A special case is when $f$ is an SOS of polynomials
($w=1$), and the existence of $u_i$ with all coefficients in $K$ for all $i$ is
conjectured (Sturmfels;
cf.\ \citep{%
Hillar09%
,Ka09:email%
,Quarez09%
,Scheid09%
}).
Our method can certify ``absolute'' impossibility by SOSes,
that is, for coefficients from all possible subfields of $\RR${}.
Our certificates are rational, that is, they have their scalars in $\QQ$. 
The problem whether there exists a representation of a given
degree with coefficients in $\QQ$ 
appears to be
decidable \citep{SZ10}.

As in \citeP{KLYZ09}, we compute our certificates, the separating
hyperplanes in Farkas's Lemma, by first computing a numerical
approximation 
numerical \sdp solver
and then converting the numerical scalars to
exact rational numbers.  
For ill-posed polynomials (see Example \ref{ex::illposed} below), 
high-accuracy \sdp solver \citep{SDPToolsGUO} is needed. %
The separating hyperplane is the strictly
feasible solution to a \sdp whose objective function tends to
$-\infty$.  We compute such a strictly feasible solution by the
Big-M method \citep{vandenberghe96semidefinite}.  The \sdps in
\citep{AhmadiParriloNotSOSC} and ours certify infeasibility of
SOSes, which has been  
generalized to infeasibility of arbitrary
linear matrix inequalities \citep{KLSchw11}.

We have tested our method on polynomials from the literature.  In particular,
we show that the SOS proofs of \psdness in \citeP{KYZ09} indeed require
denominators for three polynomials.  The {\itshape ArtinProver\/} program
\citeP{KLYZ09} successfully introduced denominators not only for purpose of
handling inequalities that do not allow a polynomial SOS proof, but also for avoiding
possible non-rational SOSes, to which the \sdp solvers may have converged in
the case where the Gram matrix is intrinsically rank deficient (our ``hard
case'' \citeP{KLYZ09}){}.  Our impossibility certificates show that for the proof
of the Monotone Column Permanent conjecture in dimension~$4$, actually the
former is the case.

A final problem is to explicitly construct a \psd polynomial for which the
Hilbert-Artin representation (\ref{eq:har}) must have $\deg(\sum_j v_j^2) \ge 4$.
Bruce Reznick in 2009 has kindly provided us with the challenges raised in
\citeP[Section~7]{ESS}: how necessarily high must be the powers $(x_1^2 +
\cdots + x_n^2)^r$ in the (uniform) denominators such that a family $f_{n,k}$
(see Example~\ref{ex:ess} below)
of even symmetric sextics in $n$ variables is an SOS, where $2 \le k \le n-2$?
In \citeP{ESS} it is proven that for $f\ess{4}{2}$ one has $r=2$.  We can compute
certificates that show that for $f\ess{4}{2}$, $f\ess{5}{2}$, 
$f\ess{6}{2}$, the degree lower bound $\ge 4$ and for 
$f\ess{5}{3}$, $f\ess{6}{4}$, the lower bound $\ge 6$ 
even hold %
for {\itshape any\/} denominator $\sum_j v_j^2$
in (\ref{eq:har}){}.  

\paragraph{Notation} Throughout this paper, $\NN$ denotes the set of
nonnegative integers and we set $\NN^n_t=\{\alpha\in \NN^n\ |\
|\alpha|=\sum^{n}_{i=1}\alpha_i \le t\}$ for $t\in \NN$.
$\RR[X]=\RR[X_1,\ldots,X_n]$ denotes the ring of polynomials in variables
$X= 
(X_1,\ldots,X_n)$ with real coefficients. 
Given a polynomial
$f={\sum_{\alpha}}f_{\alpha}X_1^{\alpha_1}\ldots X_n^{\alpha_n}
\in \RR[X]$, let $\text{supp}(f)=\{X_1^{\alpha_1}\ldots
X_n^{\alpha_n}\ |\ c_{\alpha}\neq 0\}$, i.e., 
the set of the
support terms of $f$. Denote by $\deg(f)$ the total degree
of $f$. 
Given $n \ge 1$ 
and $e\ge 0$, let
$\calT{X}{e}=\{X_1^{e_1}\ldots X_n^{e_n}\ |\sum^{n}_{i=1}e_i\le
e\}$, 
i.e., 
the set of all terms 
of total 
degree
$\le e$ in the $n$ variables $X$. 
For a given 
subset $\mathcal{T} 
\subseteq \calT{X}{e}$, we introduce the following notation 
for a term-restricted SOS,
$\SOS{\mathcal{T} 
}=\{\sum v_j^2\ |\ v_j \in \RR[X], \text{supp}(v_j)\subseteq
\mathcal{T} 
\}$, 
and the following notation for a denominator term-restricted Hilbert-Artin representation,
\begin{equation}
\RSOS{\mathcal{T} 
}=
\Big\{
\sum u_i^2\Big/\sum v_j^2 \Bigl.\;\Bigr|\;
u_i,v_j \in \RR[X],
\forall j\colon 
\text{supp}(v_j)\subseteq \mathcal{T} 
\Big\}. 
\end{equation}
Finally, we write the shorthand $\RSOSdeg{2e} = \RSOS{\calT{X}{e}}$.


By $\symm{\RR}{k}$ 
we denote the subspace of real symmetric $k\times k$ matrices.
For a matrix $W\in \symm{\RR}{k}$, $W \succeq 0$ means $W$ is positive semidefinite.
The bold number zero ${\mathbf 
0}$ denotes the zero matrix, and $I$ denotes the identity matrix.

\ifskiptext\else
\section{Hilbert-Artin representation of positive semidefinite polynomials}\label{sec::polys}

\subsection{Rational Function Sum-of-Squares and Semidefinite Programming}
For a given subset $\mathcal{T}\subseteq \calT{X}{e}$,
note that $f\in\RSOS{\mathcal{T}}$ if and only if
\[
    0=\underset{i}{\overset{l}{\sum}} u_i(X)^2+(-f)\underset{j}{\overset{l'}{\sum}}
    v_j(X)^2, 
\]
for some polynomials $u_i(X), v_j(X)\in\RR[X]$ 
with
$\text{supp}(v_j)\in \mathcal{T}$. Consider the following set:
\begin{equation}\label{optproblem}
        \left\{\begin{array}{c|c}[W^{[1]},W^{[2]}]&
        \begin{aligned}
         &\ {m_{\calT{X}{d}}}^TW^{[1]}m_{\calT{X}{d}}=f(X)
     \cdot {m_{\mathcal{T}}}^T W^{[2]} m_{\mathcal{T}}\\
     &\ W^{[1]} \succeq 0,\ W^{[2]}\succeq 0,\ \text{Tr}(W^{[2]})=1
        \end{aligned}
    \end{array}
    \right\},
\end{equation}
where $m_{\mathcal{T}}$ and $m_{\calT{X}{d}}$ denote the column vectors
which consist of the elements in $\mathcal{T}$ and $\calT{X}{d}$,
respectively.
Here and hereafter, we let $d=\lceil e+\deg(f)/2\rceil$, and
therefore,
\[
\left\{X^{\alpha+\beta}\ |\
X^{\alpha},X^{\beta}\in\calT{X}{d}\right\} \supseteq
\left\{X^{\alpha+\beta+\gamma}\ |\ X^{\gamma}\in \text{supp}(f),
X^{\alpha},X^{\beta}\in \mathcal{T}\right\}.
\]
The last constraint $\text{Tr}(W^{[2]})=1$ is added to enforce
that $W^{[2]}\neq{\mathbf 0}$.
\begin{prop}
We have $f\notin \RSOS{\mathcal{T}}$ if and only if  the set 
{\upshape(\ref{optproblem})} is empty.
\end{prop}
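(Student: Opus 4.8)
The plan is to prove the contrapositive equivalence, namely that the set (\ref{optproblem}) is nonempty if and only if $f\in\RSOS{\mathcal{T}}$, by invoking in both the numerator and the denominator the standard Gram-matrix characterization of sums of squares: a polynomial $p$ lies in $\SOS{B}$ for a monomial set $B$ precisely when $p=m_B^T W m_B$ for some $W\succeq 0$, where $m_B$ is the column vector of the monomials in $B$. The only ingredients beyond this classical fact are the degree bookkeeping that justifies the choice $d=\lceil e+\deg(f)/2\rceil$ for the numerator and the role of the normalization $\Tr(W^{[2]})=1$ in guaranteeing a nonzero denominator.

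For the forward direction, suppose $f\in\RSOS{\mathcal{T}}$. Unwinding the definition and clearing the denominator yields $u_i,v_j\in\RR[X]$ with $\text{supp}(v_j)\subseteq\mathcal{T}$, not all $v_j$ zero, and $\sum_i u_i^2=f\cdot\sum_j v_j^2$. Writing each $v_j=c_j^T m_{\mathcal{T}}$, the matrix $\tWW=\sum_j c_j c_j^T\succeq 0$ satisfies $m_{\mathcal{T}}^T\tWW\, m_{\mathcal{T}}=\sum_j v_j^2$ and is nonzero, so $\Tr(\tWW)>0$. Because $\sum_i u_i^2=f\cdot\sum_j v_j^2$ has degree at most $\deg(f)+2e$, each $u_i$ has degree at most $d$ and is therefore supported on $\calT{X}{d}$; expanding $u_i=b_i^T m_{\calT{X}{d}}$ gives $\tW=\sum_i b_i b_i^T\succeq 0$ with $m_{\calT{X}{d}}^T\tW\, m_{\calT{X}{d}}=\sum_i u_i^2$. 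The polynomial identity then reads $m_{\calT{X}{d}}^T\tW\, m_{\calT{X}{d}}=f\cdot m_{\mathcal{T}}^T\tWW\, m_{\mathcal{T}}$; dividing both Gram matrices by $t=\Tr(\tWW)>0$ produces a pair $[W^{[1]},W^{[2]}]$ meeting all constraints of (\ref{optproblem}), which is hence nonempty.

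For the reverse direction, take any $[W^{[1]},W^{[2]}]$ in (\ref{optproblem}). Factor the positive semidefinite matrices as $W^{[2]}=\sum_j c_j c_j^T$ and $W^{[1]}=\sum_i b_i b_i^T$ (e.g.\ by a spectral or Cholesky decomposition), and set $v_j=c_j^T m_{\mathcal{T}}$ and $u_i=b_i^T m_{\calT{X}{d}}$, so that $\text{supp}(v_j)\subseteq\mathcal{T}$, $m_{\mathcal{T}}^T W^{[2]} m_{\mathcal{T}}=\sum_j v_j^2$, and $m_{\calT{X}{d}}^T W^{[1]} m_{\calT{X}{d}}=\sum_i u_i^2$. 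The constraint $\Tr(W^{[2]})=1$ forces $W^{[2]}\neq\mathbf{0}$, hence some $v_j\neq 0$, and since a real sum of squares vanishes identically only when every summand does, $\sum_j v_j^2$ is a nonzero polynomial. The defining equation of (\ref{optproblem}) becomes $\sum_i u_i^2=f\cdot\sum_j v_j^2$, and dividing by the nonzero polynomial $\sum_j v_j^2$ yields $f=\sum_i u_i^2\big/\sum_j v_j^2$ with $\text{supp}(v_j)\subseteq\mathcal{T}$, i.e.\ $f\in\RSOS{\mathcal{T}}$.

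I expect the only delicate point to be the degree bound showing that the numerator squares $u_i$ are supported on $\calT{X}{d}$: this is exactly what the choice $d=\lceil e+\deg(f)/2\rceil$ and the monomial-set containment displayed above the proposition guarantee, so no monomials outside $\calT{X}{d}$ can occur. Everything else reduces to the Gram-matrix lemma together with the elementary observation that $\Tr(W^{[2]})=1$ rules out the degenerate denominator.
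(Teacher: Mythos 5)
Your proof is correct and follows exactly the route the paper intends: the paper states this proposition without a written proof, relying on the Gram-matrix characterization of sums of squares sketched in the "note that" remark just above display (\ref{optproblem}), which is precisely the argument you spell out (including the degree bound on the $u_i$ and the role of $\Tr(W^{[2]})=1$). Nothing is missing.
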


Now we review the following standard {\itshape Semidefinite Program} (SDP)
(see \citep{vandenberghe96semidefinite}),
\begin{equation}\label{StandardSDP}
\begin{array}{llll}
\underset{W\in \symm{\RR}{k}}{\sup}&\ -C\bullet W & ~~ \underset{y\in \mathbb{R}^l}{\inf}&\ b^Ty\\
 s.t.&\ A_i\bullet W=b_i,\ i=1\cdots l, & ~~ s.t. &  C+\sum_{i=1}^{l} y_iA_i\succeq
 0. \\
     &\ W\succeq 0. &
\end{array}
\end{equation}

For symmetric matrices $C,\ W$, the scalar product in 
 $\RR^{n\times n}$ space is defined as
\[
    C\bullet W=\langle C, W\rangle={\sum_{i=1}^n}{\sum_{j=1}^n}c_{i,j}w_{i,j}=\Tr CW.
\]
Let
\begin{equation}\label{def::G}
{m_{\calT{X}{d}}}^T W^{[1]} m_{\calT{X}{d}}=\underset{\alpha}{\sum}(G^{[\alpha]}\bullet W^{[1]})X^{\alpha},
\end{equation}
where $G^{[\alpha]}$ are scalar symmetric matrices and $X^{\alpha}$ are all
possible terms appearing in the polynomial of degree $\le 2d$. Similarly,
let
\[
(-f(X))\cdot {m_{\mathcal{T}}}^T W^{[2]} m_{\mathcal{T}}=\underset{\beta}{\sum}(H^{[\beta]}\bullet W^{[2]})X^{\beta},
\]
where $H^{[\beta]}$ are symmetric matrices and $X^{\beta}$ are all possible
terms appearing in the product of $(-f(X))$ with a polynomial of degree $\le 2e$.
Now we consider the following block SDP:
\begin{equation}\label{primal}
        \begin{aligned}
               \underset{W\in\symm{\RR}{k}}{\sup} & \ -C\bullet W\\
        s.t. &\ \left[\begin{array}{c} \vdots \\ A^{[\alpha]}\bullet W \\ \vdots \\ A\bullet W
        \end{array}\right]=\left[\begin{array}{c}\vdots\\0\\ \vdots \\ 1\end{array}\right],
        &  W \succeq 0.\\
        \end{aligned}
\end{equation}
where 
$k={\tbinom{n+d}{d}+\tbinom{n+e}{e}}$,
\[
    C:=\left[\begin{array}{cc} {\bf 0} & \\ & {\bf 0}  \end{array}\right], \ \  W:=\left[\begin{array}{cc} W^{[1]} & *  \\  * & W^{[2]} \end{array}\right],\  \
    A^{[\alpha]}:=\left[\begin{array}{cc} G^{[\alpha]} & \\ & H^{[\alpha]}\end{array}\right],\  \
    A:=\left[\begin{array}{cc} {\bf 0} & \\ & I  \end{array}\right] \ \
\]
and $\alpha$ ranges over 
$\NN^n_{2d}$.   The matrix  $C$ can be chosen as a random
symmetric matrix. We set it to be a zero matrix  only for the
convenience of discussions below.
 For all  block positive semidefinite 
matrices appearing in the present paper, we 
use the symbol  $*$ to indicate that the associated elements could
be any real numbers such that the whole matrices are still
positive semidefinite 
 and
leave some positions blank to indicate that the associated blocks
are zero matrices.

\begin{prop}
We have $f\notin \RSOS{\mathcal{T}}$ if and only if SDP {\upshape(\ref{primal})}
is infeasible. 
\end{prop}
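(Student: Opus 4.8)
The plan is to deduce this from the preceding proposition by showing that SDP~(\ref{primal}) is feasible if and only if the set~(\ref{optproblem}) is nonempty. Since the objective $-C\bullet W$ vanishes identically (we chose $C=\mathbf 0$), the only content of the phrase ``SDP~(\ref{primal}) is infeasible'' is that its feasible region is empty, so this equivalence is exactly what is needed: chaining it with the preceding proposition, which identifies emptiness of~(\ref{optproblem}) with $f\notin\RSOS{\mathcal T}$, gives the statement.

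First I would unpack the linear constraints. Because each $A^{[\alpha]}$ and the matrix $A$ are block diagonal with the same splitting as $W$, the off-diagonal blocks marked $*$ never enter the traces: for any $W=\bigl[\begin{smallmatrix}W^{[1]}&*\\ *&W^{[2]}\end{smallmatrix}\bigr]$ one computes $A^{[\alpha]}\bullet W=G^{[\alpha]}\bullet W^{[1]}+H^{[\alpha]}\bullet W^{[2]}$ and $A\bullet W=\Tr(W^{[2]})$, the off-diagonal entries dropping out of the trace precisely because $A^{[\alpha]}$ and $A$ have no off-diagonal blocks. By the defining expansions~(\ref{def::G}) of the $G^{[\alpha]}$ and of the $H^{[\alpha]}$, the family of equations $A^{[\alpha]}\bullet W=0$, with $\alpha$ ranging over $\NN^n_{2d}$, is exactly the coefficientwise form of the polynomial identity
\[
{m_{\calT{X}{d}}}^T W^{[1]} m_{\calT{X}{d}} = f(X)\cdot{m_{\mathcal T}}^T W^{[2]} m_{\mathcal T},
\]
where I would note that the choice $d=\lceil e+\deg(f)/2\rceil$ guarantees that every exponent occurring on the right also occurs on the left, so that the missing $H^{[\alpha]}$ are genuinely $\mathbf 0$ and nothing is lost. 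The remaining scalar constraint $A\bullet W=1$ is just $\Tr(W^{[2]})=1$. Thus the linear constraints of~(\ref{primal}), read off the diagonal blocks, coincide with the two equalities defining~(\ref{optproblem}).

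It then remains to bridge the two semidefiniteness requirements, namely $W\succeq 0$ in~(\ref{primal}) against $W^{[1]}\succeq 0$ and $W^{[2]}\succeq 0$ in~(\ref{optproblem}). For the forward direction, if $W\succeq 0$ is feasible for~(\ref{primal}) then its diagonal blocks $W^{[1]},W^{[2]}$ are principal submatrices of a \psd matrix, hence themselves \psd, and they satisfy the two equalities above, so $[W^{[1]},W^{[2]}]$ lies in~(\ref{optproblem}). Conversely, given any $[W^{[1]},W^{[2]}]$ in~(\ref{optproblem}), I would exhibit a feasible point of~(\ref{primal}) by taking the off-diagonal $*$ blocks to be zero: the block-diagonal matrix $W=\bigl[\begin{smallmatrix}W^{[1]}&\mathbf 0\\ \mathbf 0&W^{[2]}\end{smallmatrix}\bigr]$ is \psd because its blocks are, and it satisfies all linear constraints since those depend only on the diagonal. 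Combining the two directions yields the feasibility/nonemptiness equivalence, and invoking the preceding proposition finishes the proof. None of the steps is genuinely difficult; the only point demanding care is the block-structure bookkeeping that makes the $*$ blocks irrelevant to feasibility, so that the freedom in the off-diagonal entries neither helps nor hurts.
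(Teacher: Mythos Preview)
Your argument is correct. The paper itself offers no proof of this proposition: it is stated immediately after the construction of~(\ref{primal}) and treated as evident from the definitions, just as the preceding proposition about the set~(\ref{optproblem}) is. Your write-up makes explicit exactly the bookkeeping the paper leaves to the reader---that the block-diagonal structure of $A^{[\alpha]}$ and $A$ renders the off-diagonal $*$ blocks irrelevant, that the linear constraints encode the polynomial identity coefficientwise, and that the single constraint $W\succeq 0$ is interchangeable with the pair $W^{[1]}\succeq 0$, $W^{[2]}\succeq 0$ for feasibility purposes---so there is nothing to compare beyond noting that you have supplied details the authors considered routine.
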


\subsection{Dual Problem and Certification}
Before we consider the dual problem of (\ref{primal}), let us
review some definitions about  moment matrices and  localizing
moment
matrices. 
Given a sequence $y=(y_{\alpha})_{\alpha \in \NN^n}\in
\RR^{\NN^n}$, its {\itshape moment matrix} is the (infinite) real symmetric 
matrix $M(y)$ indexed by $\NN^n$, with $(\alpha, \beta)$th entry
$y_{\alpha+\beta}$, for $\alpha,\beta\in \NN^n$. Given an integer 
$t\ge 1$ and a truncated sequence $y=(y_{\alpha})_{\alpha \in 
\NN_{2t}^n}\in \RR^{\NN_{2t}^n}$, its {\itshape moment matrix of
order t} is the matrix $M_t(y)$ 
with
$(\alpha, \beta)$th entry $y_{\alpha+\beta}$, for $\alpha,\beta\in
\NN_t^n$. For a given polynomial $q\in \RR[X]$, 
if the $(i,j)$th entry of $M_t(y)$ is $y_{\beta}$, then the $t$th
{\itshape localizing moment matrix} of $q$ is defined by
\[
    M_t(qy)(i,j):=\underset{\alpha}{\sum}q_{\alpha}y_{\alpha+\beta}.
\]
More details about moment matrices, see 
\citep{Lasserre01globaloptimization, Lasserre09,Laurent_sumsof}. 

According to (\ref{StandardSDP}), the dual problem of (\ref{primal}) 
is
\begin{equation}\label{dual}
        \begin{aligned}
             s^*:=\underset{(y,s)\in\RR^{m+1}}{\inf} & \ s\\
                s.t.  &\ M(y,s)\succeq 0,\\
        \end{aligned}
\end{equation}
where
\[
    M(y,s):=\left[\begin{array}{cc}M_d(y) & \\ & M_e((-f)y)+sI\end{array}\right],
\]
$y:=(y_{\alpha})_{\alpha \in \NN_{2d}^n}\in \RR^{\NN_{2d}^n}$ and $m={\tbinom{n+2d}{2d}}$, 
$M_d(y)$ is a truncated moment matrix of order $d$ and
$M_e((-f)y)$ is the $e$th localizing  moment matrix.

The next lemma shows that the  problem (\ref{dual}) is strictly
feasible. The proof is similar to the one given in
\citep[Proposition3.1]{Lasserre01globaloptimization}.
\begin{lemma}\label{lem::SF}
There exists $(\tilde{y},\tilde{s})\in\RR^{m+1}$ such that 
$M_d(\tilde{y})\succ 0$ and $M_e(-f\tilde{y})+\tilde{s}I\succ 0$.
\end{lemma}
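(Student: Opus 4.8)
The plan is to exhibit an explicit strictly feasible point by evaluating moments against a probability measure with a density that is everywhere positive on $\RR^n$, in the spirit of \citep[Proposition~3.1]{Lasserre01globaloptimization}. Concretely, I would fix any finite Borel measure $\mu$ on $\RR^n$ whose support is all of $\RR^n$ and whose moments up to order $2d$ are finite, for instance the Gaussian measure $d\mu = c\,e^{-\|x\|^2}\,dx$, and define $\tilde{y}_\alpha = \int_{\RR^n} x^\alpha \, d\mu(x)$ for $\alpha \in \NN^n_{2d}$. The first step is to check that the truncated moment matrix of such a measure is strictly positive definite: for any nonzero coefficient vector $c$ indexing the terms in $\calT{X}{d}$, one has $c^T M_d(\tilde{y}) c = \int_{\RR^n} \big(\sum_\alpha c_\alpha x^\alpha\big)^2 d\mu(x)$, which is the integral of a nonnegative polynomial against a measure of full support; it vanishes only if the polynomial $\sum_\alpha c_\alpha x^\alpha$ is identically zero, i.e.\ $c=0$. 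Hence $M_d(\tilde{y}) \succ 0$.

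The second step handles the localizing block. The localizing moment matrix satisfies $c^T M_e((-f)\tilde{y}) c = \int_{\RR^n} (-f(x)) \big(\sum_\beta c_\beta x^\beta\big)^2 d\mu(x)$ for coefficient vectors $c$ indexing $\mathcal{T}$. Since $f \succeq 0$, the integrand is $\le 0$ wherever $f$ is defined, so $M_e((-f)\tilde{y})$ is negative semidefinite and in particular its eigenvalues are bounded above by some finite quantity. (Finiteness uses that $f$ has polynomial growth and $\mu$ has finite moments of the required order; at the real poles of $f$, which form a measure-zero set, the integrand is treated as in the paper's convention and contributes nothing.) Let $\lambda_{\max}$ denote the largest eigenvalue of $M_e((-f)\tilde{y})$; then choosing $\tilde{s} > -\lambda_{\max}$, i.e.\ any $\tilde{s}$ strictly larger than $\max\{0, \lambda_{\max}\}$ if one wants a clean sign, forces $M_e((-f)\tilde{y}) + \tilde{s} I \succ 0$. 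Thus the pair $(\tilde{y},\tilde{s})$ makes the block matrix $M(\tilde{y},\tilde{s})$ strictly positive definite, establishing strict feasibility of \eqref{dual}.

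The main obstacle I anticipate is the integrability and well-definedness of the moments of $(-f)\tilde{y}$ when $f$ is a rational function rather than a polynomial, because then $f$ can blow up near the real zeros of its denominator and the localizing integrals $\int (-f) x^\beta \, d\mu$ need not converge. For the polynomial case treated in this section of the paper the issue does not arise, and one can simply invoke the Gaussian (or any full-support, rapidly decaying) measure directly. For the rational case I would either restrict to a measure supported where the denominator is bounded away from zero, or — cleaner — work with the moment sequence only formally through the finitely many linear combinations appearing as entries of $M_e((-f)\tilde{y})$, replacing the measure-theoretic argument by the observation that the strict definiteness of $M_d(\tilde{y})$ is all that genuinely requires a measure, while the localizing block only needs its largest eigenvalue to be finite, which holds as soon as the relevant moments are finite. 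Once both blocks are controlled, the choice of $\tilde{s}$ is immediate, so the real content of the lemma is the full-support positivity argument for $M_d(\tilde{y})$.
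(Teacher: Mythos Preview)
Your approach matches the paper's almost exactly: take a measure with strictly positive density and finite moments, use the integral representation to get $M_d(\tilde y)\succ 0$, then shift the localizing block by a large enough $\tilde s$. There is, however, a slip in your second step: for $M_e((-f)\tilde y)+\tilde s I\succ 0$ you need $\tilde s>-\lambda_{\min}\bigl(M_e((-f)\tilde y)\bigr)$, not $\tilde s>-\lambda_{\max}$. With your choice, if the localizing matrix has a very negative eigenvalue the shifted matrix will still fail to be positive definite; the bound that matters is on the eigenvalues \emph{below}, not above.

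Relatedly, the detour through $f\succeq 0$ to conclude $M_e((-f)\tilde y)\preceq 0$ is unnecessary and is not used in the paper. The lemma holds for any polynomial $f$: all that is needed is that $M_e((-f)\tilde y)$ is a fixed finite real symmetric matrix (its entries are finite linear combinations of the moments $\tilde y_\alpha$), so its smallest eigenvalue is finite and one can pick $\tilde s$ accordingly. Dropping this detour also dissolves your integrability concerns for this lemma, since here $f$ is a polynomial and no improper integrals arise.
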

\begin{proof}
Let $\mu$ be a probability measure on $\RR^n$ with a {\itshape
strictly positive} density $h$ with respect to Lebesgue measure
such that
\[
    \tilde{y}_{\alpha}:=\int X^{\alpha} d\mu < \infty.
\]
For any polynomial $q(X)\in\RR[X]$ with $\text{supp}(q)\in \calT{X}{d}$, let
$\text{vec}(q)$ denote its sequence of coefficients in the monomial basis
$\calT{X}{d}$. We have
\begin{equation*}
\begin{aligned}
\langle \text{vec}(q)^T,M_d(\tilde{y})\text{vec}(q)\rangle&=\int q(x)^2\mu(dx)\\
&=\int q(x)^2h(x)dx\\
&>0\ \text{whenever}\ q\neq 0, 
\end{aligned}
\end{equation*}
which implies $M_d(\tilde{y})\succ 0$. 
Take
\[
    \tilde{s}>-\lambda_{\text{min}}(M_e((-f)\tilde{y})),
\]
then $M_e(-f\tilde{y})+\tilde{s}I\succ 0$.
\end{proof}

For standard SDPs in (\ref{StandardSDP}), we have the following important duality fact.
\begin{lemma}\label{lem::Farkas}
{\upshape \citep[Lemma 2.3; {\scshape Semidefinite Farkas Lemma\/}]%
{Alizadeh93interiorpoint}}\newline
Let $A_i\in \symm{\RR}{k}$ for all $i=1,\ldots, l$ and let $b\in \RR^l$.
Suppose there exists a vector $y\in\RR^l$
such that $\sum_{i=1}^l y_iA_i\succ 0$.
Then exactly one of the following is true:
\begin{enumerate}[{\upshape 1.}]
\item
There exists a \psd symmetric matrix $W\in\symm{\RR}{k}$, $W \succeq 0$,
such that $A_i\bullet W=b_i$ for all $i=1,\ldots,l$;
\item
There exists a vector $\hat y\in \RR^l$ such that $\sum_{i=1}^l \hat
y_i A_i \succeq 0$ and $b^T \hat y < 0$. 
\end{enumerate}
\end{lemma}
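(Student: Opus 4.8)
The plan is to treat this as a theorem of the alternative and prove it in two independent parts: first that the two alternatives cannot both hold, and then that the failure of alternative~1 forces alternative~2, with the strict-feasibility hypothesis entering only in the second part.

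For mutual exclusivity I would argue directly. If a feasible $W\succeq 0$ as in~1 and a vector $\hat y$ as in~2 both existed, then
\[
b^T\hat y=\sum_{i=1}^l \hat y_i\,b_i=\sum_{i=1}^l \hat y_i\,(A_i\bullet W)=\Bigl(\sum_{i=1}^l \hat y_i A_i\Bigr)\bullet W\ge 0,
\]
since the $\bullet$-product of two \psd matrices is nonnegative; this contradicts $b^T\hat y<0$. Hence at most one alternative holds, and this step needs neither the hypothesis $\sum_i y_iA_i\succ 0$ nor any separation.

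For the converse I would introduce the linear map $\mathcal{A}\colon\symm{\RR}{k}\to\RR^l$, $\mathcal{A}(W)=(A_1\bullet W,\ldots,A_l\bullet W)$, and consider its image on the \psd cone, $S=\{\mathcal{A}(W)\mid W\succeq 0\}\subseteq\RR^l$, a convex cone containing the origin. Failure of alternative~1 says exactly $b\notin S$. The goal is to separate $b$ from $S$ by a hyperplane; because $S$ is a cone, a separating direction $\hat y$ will satisfy $\hat y^T s\ge 0$ for all $s\in S$ together with $b^T\hat y<0$. The first inequality reads $(\sum_i\hat y_iA_i)\bullet W\ge 0$ for every $W\succeq 0$, which by self-duality of the \psd cone is equivalent to $\sum_i\hat y_iA_i\succeq 0$ — precisely alternative~2.

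The crux, and the only place the hypothesis $\sum_i y_iA_i\succ 0$ is used, is showing that $S$ is \emph{closed}, so that strict separation of the exterior point $b$ is available and delivers $b^T\hat y<0$ rather than merely $\le 0$. Here I would exploit the strictly feasible $y$: writing $M_0=\sum_i y_iA_i\succ 0$ and $\lambda=\lambda_{\min}(M_0)>0$, every $W\succeq 0$ satisfies $\langle y,\mathcal{A}(W)\rangle=M_0\bullet W\ge \lambda\,\Tr(W)$. Thus along any sequence $W_n\succeq 0$ with $\mathcal{A}(W_n)$ convergent, the values $\langle y,\mathcal{A}(W_n)\rangle$ are bounded, forcing $\Tr(W_n)$ and hence $\|W_n\|$ to stay bounded; a convergent subsequence $W_n\to W^*$ then has $W^*\succeq 0$ and $\mathcal{A}(W^*)=\lim\mathcal{A}(W_n)$, so the limit lies in $S$. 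I expect this closedness to be the main obstacle: without positive definiteness of $\sum_i y_iA_i$ the image $S$ can fail to be closed, the separation degrades to a weak inequality, and the strict bound $b^T\hat y<0$ is lost. With closedness established, the classical separating-hyperplane theorem for a point outside a closed convex cone completes the argument.
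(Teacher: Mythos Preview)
The paper does not give its own proof of this lemma: it is quoted verbatim as \citep[Lemma~2.3]{Alizadeh93interiorpoint} and used as a black box, with a further pointer to \citep[Section~4.2]{COEDG} for variants. So there is nothing to compare your argument against in the paper itself.

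That said, your proposal is a correct and standard proof of the semidefinite Farkas lemma. The mutual-exclusivity step is clean and needs no hypothesis. For the other direction, your use of the strictly feasible $y$ to bound $\Tr(W_n)$ via $M_0\bullet W_n\ge\lambda\,\Tr(W_n)$ is exactly the right mechanism to force closedness of the image cone $S=\mathcal{A}(\symmpsd{\RR}{k})$, and you correctly identify this as the only place the hypothesis enters. One small point worth making explicit in the separation step: the hyperplane theorem gives $\hat y$ and a scalar $c$ with $\hat y^T s\ge c>\hat y^T b$ for all $s\in S$; from $0\in S$ you get $c\le 0$, and from $S$ being a cone you get $\hat y^T s\ge 0$ for all $s\in S$ (else scaling drives the left side to $-\infty$), whence $b^T\hat y<c\le 0$. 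You allude to this but it is the place where a careless reader might worry about strict versus weak inequality.
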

We call the vector $\hat y$ {\itshape Farkas's 
certificate vector of infeasibility.}
For other forms of the Farkas 
Lemma, see \citep[Section 4.2]{COEDG}.

Note that Lemma~\ref{lem::SF} implies that the assumption in 
the Farkas 
Lemma~\ref{lem::Farkas} is satisfied in SDPs (\ref{primal}) and
(\ref{dual}). Then we have our main result:
\begin{theorem}\label{th::main1}
Given a polynomial $f\in \QQ[X]$ and an integer $e\ge 0$, 
 let
$d=\lceil e+\deg(f)/2\rceil$, then for any subset
$\mathcal{T}\subseteq \calT{X}{e}$, the following are equivalent:
\begin{enumerate}[\upshape 1.]
\item\label{1} $f\notin \RSOS{\mathcal{T}}$,
\item\label{2} There exists a rational vector $\hat{y}=(\hat{y}_\alpha)\in \QQ^{m}$
with $m=\tbinom{n+2d}{2d}$ such that 
$M_d(\hat{y})\succeq 0$ and $\ M_e(f\hat{y})\prec 0$.
\end{enumerate}
\end{theorem}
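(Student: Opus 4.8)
The plan is to read the theorem as the concrete, coordinatized form of the Semidefinite Farkas Lemma (Lemma~\ref{lem::Farkas}) applied to the primal--dual pair (\ref{primal})--(\ref{dual}), and then to upgrade the resulting \emph{real} certificate to a \emph{rational} one. The guaranteeing hypothesis of Lemma~\ref{lem::Farkas} (dual strict feasibility, i.e.\ existence of a $y$ with $\sum_i y_iA_i\succ 0$) is exactly what Lemma~\ref{lem::SF} supplies, as already noted before the theorem statement.

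First I would set up the dictionary between the Farkas certificate and the moment matrices. Writing a candidate certificate as $(\hat y,s)$, where $\hat y=(\hat y_\alpha)_{\alpha\in\NN^n_{2d}}\in\RR^m$ pairs with the constraints $A^{[\alpha]}\bullet W=0$ and $s$ pairs with $A\bullet W=1$, the matrix $\sum_\alpha \hat y_\alpha A^{[\alpha]}+sA$ is block diagonal. From the definition (\ref{def::G}) of the $G^{[\alpha]}$ one checks entrywise that $\sum_\alpha \hat y_\alpha G^{[\alpha]}=M_d(\hat y)$, since its $(\beta,\gamma)$ entry equals $\hat y_{\beta+\gamma}$; the analogous bookkeeping for the $H^{[\alpha]}$ gives $\sum_\alpha \hat y_\alpha H^{[\alpha]}=M_e((-f)\hat y)=-M_e(f\hat y)$. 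Hence the Farkas condition $\sum_\alpha \hat y_\alpha A^{[\alpha]}+sA\succeq 0$ together with $b^T(\hat y,s)=s<0$ is precisely $M_d(\hat y)\succeq 0$ and $-M_e(f\hat y)+sI\succeq 0$ with $s<0$, and since $s<0$ the latter forces $M_e(f\hat y)\preceq sI\prec 0$.

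With this dictionary, both directions are short. For (\ref{2})$\Rightarrow$(\ref{1}): given $\hat y$ with $M_d(\hat y)\succeq 0$ and $M_e(f\hat y)\prec 0$, set $s=\lambda_{\max}(M_e(f\hat y))<0$; then $(\hat y,s)$ satisfies alternative~2 of Lemma~\ref{lem::Farkas}, so alternative~1 fails, the primal (\ref{primal}) is infeasible, and by the proposition relating $\RSOS{\mathcal T}$-membership to feasibility of (\ref{primal}) this gives $f\notin\RSOS{\mathcal T}$. For (\ref{1})$\Rightarrow$(\ref{2}): infeasibility of (\ref{primal}) forces alternative~2, producing a real certificate $\hat y^{*}$ with $M_d(\hat y^{*})\succeq 0$ and $M_e(f\hat y^{*})\prec 0$.

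The main obstacle is the final clause: the certificate must lie in $\QQ^m$, whereas Farkas delivers only a real one, and $M_d(\hat y^{*})\succeq 0$ is a \emph{closed} condition whose boundary cannot tolerate rounding. To overcome this I would again invoke Lemma~\ref{lem::SF}: take the strictly feasible $\tilde y$ with $M_d(\tilde y)\succ 0$ and form $\hat y(\lambda)=\hat y^{*}+\lambda\tilde y$. Because $M_d(\,\cdot\,)$ and $M_e(f\,\cdot\,)$ are linear in $y$, for every $\lambda>0$ one has $M_d(\hat y(\lambda))\succ 0$ (a \psd matrix plus a positive definite one), while $M_e(f\hat y(\lambda))\prec 0$ persists for small $\lambda$ since strict definiteness is an open condition. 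Thus $\{y\in\RR^m : M_d(y)\succ 0,\ M_e(fy)\prec 0\}$ is open and nonempty; as $f\in\QQ[X]$ makes all defining data rational, the dense set $\QQ^m$ meets it, yielding the required rational $\hat y$. The only delicate checks are the two block identities above and the claim that the perturbation preserves both strict inequalities, both routine once the dictionary is in place.
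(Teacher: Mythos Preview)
Your proof is correct and follows essentially the same route as the paper: apply the Semidefinite Farkas Lemma~\ref{lem::Farkas} to the pair (\ref{primal})--(\ref{dual}), using Lemma~\ref{lem::SF} to guarantee the strict-feasibility hypothesis, and then push the real certificate into the interior so that a rational point can be chosen nearby. The only cosmetic difference is that the paper perturbs via a convex combination $(1-t)y'+t\tilde y$ in the full $(y,s)$ space, whereas you perturb additively by $\hat y^{*}+\lambda\tilde y$ in the $y$ coordinates alone and read off the condition $M_e(f\hat y)\prec 0$ directly rather than through $s<0$; both accomplish the same thing.
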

\begin{proof}
By employing the Farkas 
Lemma~\ref{lem::Farkas} to SDPs (\ref{primal}) and
(\ref{dual}), we have that $f\notin \RSOS{\mathcal{T}}$ if and only if
there exists $p'=(y',s')\in \RR^{m+1}$ for (\ref{dual}) such that
$M(y',s')\succeq 0$ and $s'<0$. Now we prove that $p'$ can be
chosen to be rational.

Let $\tilde{p}=(\tilde{y},\tilde{s})$ be
the strictly feasible point constructed 
 in Lemma \ref{lem::SF}.
For $0<t\le 1$, let $\bar{y}=(1-t)y'+t\tilde{y}$ and
$\bar{s}=(1-t)s'+t\tilde{s}$, then $M(\bar{y},\bar{s})\succ 0$.
Since $s'<0$, it is always possible to choose  a rational number
$t$ such that $\bar{s}<0$.
 Then there exists $\varepsilon>0$ such that
for all $p=(y,s)\in B_{\bar{p}}(\varepsilon)$ where
$B_{\bar{p}}(\varepsilon)$ is a ball with  center $\bar{p}$ and
radius $\varepsilon$, we have $M(y,s)\succeq 0$.  Taking
$\varepsilon<\frac{1}{2}|\bar{s}|$, there always exists a point
$\hat{p}=(\hat{y},\hat{s})\in B_{\bar{p}}(\varepsilon)$ such that
$\hat{p}\in \QQ^{m+1}$, 
 $M_d(\hat{y})\succeq 0,\
M_e(-f\hat{y})+\hat{s}I\succeq 0$ and $\hat{s}<0$ which implies
$M_e(f\hat{y})\prec 0$.
\end{proof}

\subsection{Moment matrices and linear forms on $\RR[X]$}

In this section, we give an interpretation of 
 our infeasibility 
  certification  using  linear forms on $\RR[X]$.

Given $y\in \RR^{\NN^n}$, we define the linear form $L_y \in {(\RR[X])}^*$ by
\begin{equation}\label{linearform}
    L_y(f):=y^T\text{vec}(f)=\underset{\alpha}{\sum}y_{\alpha}f_{\alpha}\
    \text{for}~ f=\underset{\alpha}{\sum}f_{\alpha}X^{\alpha}\in \RR[X],
\end{equation}
where $\text{vec}(f)$ denotes its sequence of coefficients. 
\begin{lemma}\label{lem::linearform}{\upshape\citep[Lemma
4.1]{Laurent_sumsof}} 
Let $y\in \RR^{\NN^n}$, $L_y \in {(\RR[X])}^*$ the associated
linear form,  
and let $f,g,h\in \RR[X]$.
\begin{enumerate}[\upshape 1.]
\item $L_y(fg)=\text{vec}(f)^TM(y)\text{vec}(g)$; in particular,
$L_y(f^2)=\text{vec}(f)^TM(y)\text{vec}(f)$, $L_y(f)=\text{vec}(1)^TM(y)\text{vec}(f)$.
\item $L_y(fgh)=\text{vec}(f)^TM(y)\text{vec}(gh)=\text{vec}(fg)^TM(y)\text{vec}(h)=
\text{vec}(f)^TM(hy)\text{vec}(g)$.
\end{enumerate}
\end{lemma}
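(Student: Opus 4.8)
The plan is to reduce everything to the definition of $L_y$ on monomials, namely $L_y(X^\gamma)=y_\gamma$, together with the bilinearity of the pairing $(p,q)\mapsto \text{vec}(p)^T M(y)\text{vec}(q)$. Writing $f=\sum_\alpha f_\alpha X^\alpha$ and $g=\sum_\beta g_\beta X^\beta$ (finite sums, so no convergence issue despite $M(y)$ being infinite), I would first expand
\[
L_y(fg)=L_y\Big(\sum_{\alpha,\beta} f_\alpha g_\beta X^{\alpha+\beta}\Big)
=\sum_{\alpha,\beta} f_\alpha g_\beta\, y_{\alpha+\beta},
\]
using linearity of $L_y$ and the definition (\ref{linearform}). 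Since the $(\alpha,\beta)$ entry of $M(y)$ is exactly $y_{\alpha+\beta}$, the right-hand side is precisely $\sum_{\alpha,\beta} f_\alpha\, M(y)_{\alpha,\beta}\, g_\beta=\text{vec}(f)^T M(y)\text{vec}(g)$, which is item~1. The two stated special cases are then immediate: taking $g=f$ gives $L_y(f^2)$, and taking $f=1$ (so $\text{vec}(1)$ is the first standard basis vector) gives $L_y(f)=\text{vec}(1)^T M(y)\text{vec}(f)$.

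For item~2, the first two equalities follow by regrouping the triple product and invoking item~1. Reading $fgh$ as $f\cdot(gh)$ and applying item~1 to the two factors $f$ and $gh$ yields $L_y(fgh)=\text{vec}(f)^T M(y)\text{vec}(gh)$; reading it instead as $(fg)\cdot h$ yields $L_y(fgh)=\text{vec}(fg)^T M(y)\text{vec}(h)$. No further computation is needed here.

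The one step that requires genuine care is the third equality, $L_y(fgh)=\text{vec}(f)^T M(hy)\text{vec}(g)$, since it mixes the ordinary moment matrix with the shifted sequence $hy$. I would establish the \emph{shift identity} $L_y(h\,p)=L_{hy}(p)$ for every polynomial $p$, where $hy$ is the sequence with $(hy)_\epsilon=\sum_\delta h_\delta\, y_{\delta+\epsilon}$, consistent with the localizing-matrix definition in the text, so that the $(\alpha,\beta)$ entry of $M(hy)$ is $\sum_\gamma h_\gamma\, y_{\gamma+\alpha+\beta}$. Expanding $L_y(hp)=\sum_\gamma y_\gamma (hp)_\gamma=\sum_{\delta,\epsilon} h_\delta\, y_{\delta+\epsilon}\, p_\epsilon$ (the substitution $\epsilon=\gamma-\delta$) and comparing with $L_{hy}(p)=\sum_\epsilon (hy)_\epsilon\, p_\epsilon$ proves the shift identity. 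Applying it with $p=fg$ and then item~1 to the sequence $hy$ gives
\[
L_y(fgh)=L_{hy}(fg)=\text{vec}(f)^T M(hy)\text{vec}(g),
\]
completing the proof. The only subtlety is the index bookkeeping: making sure the shift in the definition of $hy$ matches the localizing moment matrix convention already fixed above, after which all three displayed expressions for $L_y(fgh)$ agree.
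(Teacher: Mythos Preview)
Your proof is correct. Note, however, that the paper does not actually prove this lemma: it is quoted verbatim from Laurent's survey \citep[Lemma~4.1]{Laurent_sumsof} and stated without proof, so there is no in-paper argument to compare against. Your direct expansion in the monomial basis, together with the shift identity $L_y(hp)=L_{hy}(p)$ to handle the localizing term, is exactly the standard verification and matches the conventions the paper fixes for $M(y)$ and $M_t(qy)$.
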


Now we have the following statement which is equivalent to Theorem
\ref{th::main1}:

\begin{theorem}\label{th::main2}
Given a polynomial $f\in \QQ[X]$ and an integer $e\ge 0$, 
let
 $d=\lceil e+\deg(f)/2\rceil$,
  then for any subset $\mathcal{T}\subseteq \calT{X}{e}$,
the following are equivalent:
\begin{enumerate}[\upshape 1.]
\item $f\notin\RSOS{\mathcal{T}}$,
\item There exists a rational vector $\hat{y}\in \QQ^{m}$ with
$m={\tbinom{n+2d}{2d}}$, and the associated linear form
$L_{\hat{y}}\in {(\RR[X]_{2d})}^*$ such that for any polynomials $v,
u\in\RR[X]$ with $\text{supp}(v)\in \mathcal{T}$ and $\text{supp}(u)\in
\calT{X}{d}$, we have $L_{\hat{y}}(fv^2)< 0$ and
$L_{\hat{y}}(u^2)\ge 0$.
\end{enumerate}
\end{theorem}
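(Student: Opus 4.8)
The plan is to show that the two conditions on the linear form $L_{\hat y}$ in statement~2 are simply a reformulation, via the dictionary of Lemma~\ref{lem::linearform}, of the two matrix conditions $M_d(\hat y)\succeq 0$ and $M_e(f\hat y)\prec 0$ appearing in Theorem~\ref{th::main1}; the equivalence with statement~1 then follows at once from Theorem~\ref{th::main1}. Throughout I fix $\hat y\in\QQ^m$ with $m=\tbinom{n+2d}{2d}$ and identify its entries with the values $L_{\hat y}(X^\alpha)=\hat y_\alpha$ for $\alpha\in\NN^n_{2d}$. Since $d=\lceil e+\deg(f)/2\rceil$, every $u^2$ with $\text{supp}(u)\subseteq\calT{X}{d}$ and every $fv^2$ with $\text{supp}(v)\subseteq\mathcal{T}\subseteq\calT{X}{e}$ has total degree $\le 2d$, so $L_{\hat y}$ is well defined on all of them and uses only the entries of $\hat y$ already present in Theorem~\ref{th::main1}.

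First I would treat the semidefinite condition. By part~1 of Lemma~\ref{lem::linearform}, $L_{\hat y}(u^2)=\text{vec}(u)^T M_d(\hat y)\,\text{vec}(u)$ for every $u$ with $\text{supp}(u)\subseteq\calT{X}{d}$, where $M_d(\hat y)$ is the truncated moment matrix of order $d$. As $u$ ranges over all such polynomials, $\text{vec}(u)$ ranges over all of $\RR^{\calT{X}{d}}$; hence $L_{\hat y}(u^2)\ge 0$ holds for every such $u$ if and only if the symmetric matrix $M_d(\hat y)$ is positive semidefinite, i.e.\ $M_d(\hat y)\succeq 0$.

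Next I would treat the localizing condition. Applying part~2 of Lemma~\ref{lem::linearform} with the factorization $fv^2=v\cdot f\cdot v$ gives $L_{\hat y}(fv^2)=\text{vec}(v)^T M(f\hat y)\,\text{vec}(v)$, and restricting the index set to $\mathcal{T}$ identifies the relevant principal block with the $e$th localizing moment matrix $M_e(f\hat y)$ indexed by $\mathcal{T}$ (the same index set that defines $\RSOS{\mathcal{T}}$, consistent with the dual variable $W^{[2]}$ in (\ref{dual}) being $\mathcal{T}$-indexed). Again $\text{vec}(v)$ runs over all of $\RR^{\mathcal{T}}$ as $v$ runs over polynomials with $\text{supp}(v)\subseteq\mathcal{T}$, so the strict inequality $L_{\hat y}(fv^2)<0$ for every nonzero such $v$ is equivalent to the negative definiteness $M_e(f\hat y)\prec 0$. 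Combining the two equivalences shows that statement~2 holds for a given $\hat y$ exactly when the two matrix conditions of Theorem~\ref{th::main1} hold for that same $\hat y$, and the rationality requirement $\hat y\in\QQ^m$ is common to both; invoking Theorem~\ref{th::main1} then closes the chain $1\Leftrightarrow 2$.

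The only real obstacle is the bookkeeping in the second step: one must verify that the quadratic form $v\mapsto L_{\hat y}(fv^2)$ is genuinely represented by the localizing matrix $M_e(f\hat y)$ with the index set $\mathcal{T}$, rather than by the full moment matrix, and that the degree choice $d=\lceil e+\deg(f)/2\rceil$ keeps every monomial occurring in $fv^2$ inside $\NN^n_{2d}$. Once this identification and the index-set matching with Theorem~\ref{th::main1} are pinned down, no estimates remain: strictness and nonstrictness of the two scalar inequalities transfer verbatim to negative definiteness and positive semidefiniteness of the two matrices.
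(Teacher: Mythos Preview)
Your proposal is correct and follows essentially the same route as the paper: invoke Theorem~\ref{th::main1} for the equivalence of statement~1 with the matrix conditions $M_d(\hat y)\succeq 0$ and $M_e(f\hat y)\prec 0$, then use Lemma~\ref{lem::linearform} to translate these into the linear-form inequalities of statement~2. The paper compresses this into two sentences, whereas you spell out the quadratic-form identifications and the index/degree bookkeeping explicitly; your added care about restricting to nonzero $v$ for the strict inequality is in fact a small clarification of the theorem's phrasing.
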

\begin{proof}
By (\ref{dual}) and Theorem \ref{th::main1}, we have that $f\notin
\RSOS{\mathcal{T}}$ if and only if there exists $\hat{y}\in\QQ^m$ such
that $M_d(\hat{y})\succeq 0$ and $M_e(f\hat{y})\prec 0$. According
to 
Lemma \ref{lem::linearform}, the conclusion follows.
\end{proof}

Now one has a better understanding that the existence of a 
certificate $\hat{y}$ in Theorem \ref{th::main1} implies $f\notin
\RSOS{\mathcal{T}}$. In fact if $f=\sum u_i^2/\sum v_j^2$ with
$\text{supp}(u_i)\in \calT{X}{d}$ and $\text{supp}(v_j)\in \mathcal{T}$,
then $0\le L_{\hat{y}}(\sum u_i^2)=\sum L_{\hat{y}}(fv_j^2)<0$ which
is a contradiction.

\begin{remark}
One special case is $e=0$, i.e. we certify that $f$ can not be
written as a rational SOS.   
 According to Theorem 
\ref{th::main2}, $f$ is not an SOS if and only if there is
$\hat{y}\in \QQ^m$ and the associated linear form $L_{\hat{y}}$,
such that $\forall u\in \RR[X]$ with
$\text{supp}(u)\in\calT{X}{\lceil\deg(f)/2\rceil}$,
$L_{\hat{y}}(u^2)\ge 0$ and $L_{\hat{y}}(f)< 0$. This special case
has 
also been studied in \citep{AhmadiParriloNotSOSC},  in which
$\hat{y}$ is referred as the {\itshape separating hyperplane}.
\end{remark}


\section{Computational aspects of the certification}

\subsection{Finding $\hat{y}$ by Big-M method}
Given a polynomial $f\in \QQ[X]$ and an integer $e\ge 0$, 
 note that
$f\notin\RSOS{\mathcal{T}}$ if and only if (\ref{primal}) is infeasible.
From the proof of Theorem \ref{th::main1}, we have
\begin{lemma}
$f\notin\RSOS{\mathcal{T}}$ if and only if $s^*=-\infty$ in {\upshape(\ref{dual})}.
\end{lemma}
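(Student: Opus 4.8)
The plan is to connect the infeasibility of the primal SDP~(\ref{primal}) with the unboundedness of its dual~(\ref{dual}) via the duality theory for semidefinite programs, specifically using the strict feasibility established in Lemma~\ref{lem::SF}. The statement we must prove is that $f\notin\RSOS{\mathcal{T}}$ if and only if $s^*=-\infty$ in~(\ref{dual}), i.e., the dual objective is unbounded below. Since we already know from the proposition preceding this lemma (and from Theorem~\ref{th::main1}) that $f\notin\RSOS{\mathcal{T}}$ is equivalent to the primal~(\ref{primal}) being infeasible, the work reduces to showing: the primal~(\ref{primal}) is infeasible if and only if $s^*=-\infty$ in the dual.

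First I would handle the forward direction. Suppose $f\notin\RSOS{\mathcal{T}}$, so the primal is infeasible. By the Semidefinite Farkas Lemma~\ref{lem::Farkas}, whose hypothesis is satisfied thanks to Lemma~\ref{lem::SF} (the existence of a strictly feasible point for the dual), there exists a certificate vector $\hat y$ giving a dual-feasible point with negative objective value. The key observation is that the dual~(\ref{dual}) admits a \emph{recession direction}: because the matrix $A$ in~(\ref{primal}) is $\left[\begin{smallmatrix}{\bf 0}&\\&I\end{smallmatrix}\right]\succeq 0$, decreasing $s$ along this direction keeps $M(y,s)\succeq 0$ feasible once feasibility holds, while driving the objective $s$ toward $-\infty$. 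Concretely, from any feasible $(y',s')$ with $s'<0$ one scales to push the objective arbitrarily negative while maintaining $M(y',s')\succeq 0$; this exhibits $s^*=-\infty$.

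For the converse, I would argue contrapositively: if $f\in\RSOS{\mathcal{T}}$, then the primal~(\ref{primal}) is feasible, so weak duality forces the dual objective to be bounded below by the primal optimal value $-C\bullet W=0$ (recall $C={\bf 0}$), hence $s^*\ge 0>-\infty$. More directly, any primal feasible $W\succeq 0$ yields, by weak duality for the pair~(\ref{StandardSDP}), the inequality $s\ge -C\bullet W=0$ for every dual-feasible $(y,s)$, so $s^*\ge 0$ and in particular $s^*\neq -\infty$. This closes the equivalence.

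The main obstacle I expect is making the recession-direction argument fully rigorous in the semidefinite setting, since unlike the linear-programming case one must verify that the block structure $M(y,s)=\left[\begin{smallmatrix}M_d(y)&\\&M_e((-f)y)+sI\end{smallmatrix}\right]$ genuinely permits the $sI$ term to absorb arbitrarily negative $s$ without breaking positive semidefiniteness in concert with the fixed $y$-part. The cleanest route is to invoke the Farkas certificate $\hat y$ directly—which already encodes $s<0$ dual-feasibility—and then note that the homogeneous direction $(0,-1)$ (adding $-tI$ to the lower block while leaving $M_d(y)$ untouched) is not a valid recession ray on its own; instead one uses the certificate together with strict feasibility from Lemma~\ref{lem::SF} to interpolate and rescale, exactly as in the proof of Theorem~\ref{th::main1}. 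Thus the lemma is essentially a restatement of that proof's construction, and the obstacle is bookkeeping rather than any new idea.
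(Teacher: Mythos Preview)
Your overall strategy matches the paper's, which simply remarks that the lemma follows ``from the proof of Theorem~\ref{th::main1}'' and gives no further detail; the converse via weak duality (using $C=\mathbf{0}$, hence $s^*\ge 0$ whenever the primal is feasible) is correct as you state it.

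However, your forward direction contains a genuine slip that you half-recognize but do not fully repair. The claim that ``because $A=\left[\begin{smallmatrix}\mathbf{0}&\\&I\end{smallmatrix}\right]\succeq 0$, decreasing $s$ along this direction keeps $M(y,s)\succeq 0$ feasible'' is backwards: since $A\succeq 0$ enters the dual slack with coefficient $s$, it is \emph{increasing} $s$ that preserves feasibility, while decreasing $s$ subtracts a multiple of $I$ from the lower block and eventually destroys it. You correctly note in your obstacle paragraph that $(0,-1)$ is not a recession ray, but your proposed fix---interpolating with the strictly feasible point of Lemma~\ref{lem::SF} and then rescaling ``exactly as in the proof of Theorem~\ref{th::main1}''---is unnecessary here. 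The interpolation in Theorem~\ref{th::main1} is there solely to manufacture a \emph{rational} certificate; it plays no role in showing unboundedness. The clean argument is the one you gesture at with the word ``scales'': the map $(y,s)\mapsto M(y,s)$ is \emph{linear}, so the feasible set of~(\ref{dual}) is a cone. Once Farkas (whose hypothesis is indeed secured by Lemma~\ref{lem::SF}) supplies a feasible $(y',s')$ with $s'<0$, the point $(\lambda y',\lambda s')$ is feasible for every $\lambda>0$ and has objective $\lambda s'\to -\infty$. That single sentence is the entire forward direction.
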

To find a 
certificate $\hat{y}$ in Theorem \ref{th::main1}, we need find a
feasible point of the dual problem (\ref{dual}) 
 at which the value 
of its objective function $s$ is negative. 
 We employ the Big-M method
\citep{vandenberghe96semidefinite} to (\ref{primal}) and
(\ref{dual}), and solve the following two modified SDPs 
\begin{equation}\label{Mprimal}
        \begin{aligned}
               r^*:=\underset{W\in\symm{\RR}{k},w\in\RR}{\sup} & \ -C\bullet (W-w)-\mathcal{M}w\\
        s.t. &\ \left[\begin{array}{c} \vdots \\ A^{[\alpha]}\bullet (W-w)
        \\ \vdots \\ A\bullet (W-w) \end{array}\right]=\left[\begin{array}{c}
        \vdots\\0\\ \vdots \\ 1\end{array}\right],
        &  W \succeq 0, w\ge 0,\\
        \end{aligned}
\end{equation}
\begin{equation}\label{Mdual}
        \begin{aligned}
             s^*:=\underset{(y,s)\in\RR^{m+1}}{\inf} & \ s\\
                s.t.  &\ M(y,s) \succeq 0,\\
                      &\ \Tr M(y,s)\le \mathcal{M},\\ 
        \end{aligned}
\end{equation}
where matrices $C,\ A^{[\alpha]},\ A,\ M(y,s)$ in (\ref{Mprimal})
and (\ref{Mdual}) are  defined as in (\ref{primal}) and
(\ref{dual}). Note that any feasible point of (\ref{Mdual}) is
also feasible to (\ref{dual}). As shown in
\citep{vandenberghe96semidefinite}, (\ref{Mprimal}) and
(\ref{Mdual}) are always strictly feasible and $r^*=s^*\rightarrow
-\infty$ as $\mathcal{M}\rightarrow \infty$. Hence a certificate
$\hat{y}$ is obtained by solving (\ref{Mprimal}) and (\ref{Mdual})
using  interior-point methods.

\begin{algorithm}\label{alg}\quad\\
{\bf Input:} $f\in \QQ[X]$, $e\in\ZZ_{\ge 0}$ and a subset $\mathcal{T}\subseteq \calT{X}{e}$.
\\
{\bf Output:} If $f\notin\RSOS{\mathcal{T}}$, return a certificate
$\hat{y}\in\QQ^m$.
\begin{enumerate}[\upshape I.]
\item Reduce the problem to SDPs {\upshape(\ref{primal})} and {\upshape(\ref{dual})}.
\item Fix a big ${\mathcal M\in\ZZ}$ and modify {\upshape(\ref{primal}), (\ref{dual})} 
to {\upshape(\ref{Mprimal}), (\ref{Mdual})}.
\item\label{getiter} Solve {\upshape(\ref{Mprimal})} and {\upshape(\ref{Mdual})} 
by  interior-point methods 
until a solution $p_k=(y^k,s^k)$ with $s^k<0$ is obtained.
\item\label{chrat} Find a strictly feasible point
$\tilde{p}=(\tilde{y},\tilde{s})$ of {\upshape(\ref{dual})}.
\item\label{getint} Fix $0<t\le 1$ and
$\bar{p}=(1-t)p_k+t\tilde{p}=(\bar{y},\bar{s})$ such that
$\bar{s}<0$. \item\label{getrat} Choose a rational point
$\hat{p}=(\hat{y},\hat{s})\in B_{\varepsilon}(\bar{p})$ where
$\varepsilon<\frac{1}{2}|\bar{s}|$.
\end{enumerate}
\end{algorithm}

\begin{remark}

In Step \ref{getiter}, provided that the problem is of large size and not
ill-conditioned, we can solve (\ref{Mprimal}) and (\ref{Mdual}) using SDP
solvers in Matlab like SeDuMi \citep{Sturm99} which is very 
efficient.  If the problem has small size and an accurate solution 
is needed, Maple package {\itshape SDPTools\/}
\citep{SDPToolsGUO} is a better choice. {\itshape SDPTools}, in which the above
algorithm has been implemented, is a high precision SDP solver based on the
potential reduction method in \citep{vandenberghe96semidefinite}.

\end{remark}
\begin{remark}

In practice, if (\ref{Mprimal}) and (\ref{Mdual}) in Step
\ref{getiter} are precisely computed by  interior-point methods, 
then the floating-point 
 solution $(y^k,s^k)$ is a highly accurate approximation
of a strictly feasible point of (\ref{dual}). Hence, without Step
\ref{chrat}, \ref{getint}, \ref{getrat}, one can expect that an
exact certificate can be  obtained by simply rounding $(y^k,s^k)$ to 
 a rational feasible solution 
to (\ref{dual}).

\end{remark}


\subsection{Exploiting the sparsity}

To reduce computation cost, we can replace $m_{\calT{X}{d}}$ in
(\ref{optproblem}), i.e. the vector of all  terms with degree $\le
d$ by a sparse vector containing part of $m_{\calT{X}{d}}$ due to
the following theorem:

\begin{theorem}\label{th:sparsity}{\upshape\citep[Theorem 1]{Reznick78}} 
  For a polynomial
$p(x)=\sum_{\alpha}p_{\alpha}x^{\alpha}$, we define $C(p)$ as the convex hull
of $\{\alpha |\ p_{\alpha} \neq 0 \}$, then we have  $C(p^2)=2C(p)$; for any
positive semidefinite polynomials $f$ and $g$, $C(f) \subseteq C(f+g)$; if
$f=\sum_jg_j^2$ then $C(g_j) \subseteq \frac{1}{2}C(f)$.  \end{theorem}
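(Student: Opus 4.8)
The three claims are statements about Newton polytopes, and the plan is to treat them in the stated order, deriving the third from the first two. Throughout I write $\text{supp}(p)=\{\alpha : p_\alpha\neq 0\}$ so that $C(p)=\text{conv}(\text{supp}(p))$, and I recall that the vertices of the convex hull of a finite set lie in that set. First I would prove $C(p^2)=2C(p)$. The inclusion $C(p^2)\subseteq 2C(p)$ is immediate: every term of $p^2$ has exponent $\alpha+\beta$ with $\alpha,\beta\in\text{supp}(p)$, so $\text{supp}(p^2)\subseteq\text{supp}(p)+\text{supp}(p)$, and taking convex hulls gives $C(p^2)\subseteq C(p)+C(p)=2C(p)$ as a Minkowski sum. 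For the reverse inclusion it suffices to check that each vertex of $2C(p)$, which has the form $2v$ for a vertex $v$ of $C(p)$, lies in $C(p^2)$. Since $v$ is extreme in $C(p)$, the only way to write $2v=\alpha+\beta$ with $\alpha,\beta\in\text{supp}(p)$ is $\alpha=\beta=v$; hence the coefficient of $x^{2v}$ in $p^2$ equals $p_v^2\neq 0$, so $2v\in\text{supp}(p^2)\subseteq C(p^2)$, and equality follows.

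The heart of the argument is a sign/asymptotic lemma that I would isolate: if $h\in\RR[X]$ is positive semidefinite and $v$ is a vertex of $C(h)$, then every coordinate of $v$ is even and $h_v>0$. To prove it, pick an integer functional $c$ with $\langle c,v\rangle>\langle c,\alpha\rangle$ for all $\alpha\in\text{supp}(h)\setminus\{v\}$ (possible because $v$ is a vertex, after clearing denominators and translating $c$ so the substitution below is legitimate), and substitute $x_i=\varepsilon_i t^{c_i}$ with signs $\varepsilon_i\in\{+1,-1\}$. As $t\to\infty$ the term $h_v\big(\prod_i\varepsilon_i^{v_i}\big)t^{\langle c,v\rangle}$ strictly dominates all others, so its sign equals the sign of $h(x)$ for large $t$. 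If some $v_i$ were odd, choosing $\varepsilon_i=-1$ would flip this sign; if $h_v<0$, taking all $\varepsilon_i=+1$ would make it negative. Either contradicts $h\succeq 0$, proving the lemma.

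Granting the lemma, I would prove $C(f)\subseteq C(f+g)$ for positive semidefinite $f,g$ by establishing $C(f+g)=\text{conv}(C(f)\cup C(g))$. The inclusion $\subseteq$ is clear from $\text{supp}(f+g)\subseteq\text{supp}(f)\cup\text{supp}(g)$. For $\supseteq$ it is enough that every vertex $w$ of $\text{conv}(C(f)\cup C(g))$ lies in $\text{supp}(f+g)$; such a $w$ is a vertex of $C(f)$ or of $C(g)$, and if it lies in both supports it is a vertex of each. By the lemma the corresponding coefficient is strictly positive whenever $w$ is a vertex of that polytope, so no cancellation can occur in $f_w+g_w$, whence $w\in\text{supp}(f+g)$. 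This yields the claimed equality and in particular $C(f)\subseteq C(f+g)$. Finally, for $f=\sum_j g_j^2$, applying this with the pair $g_j^2$ and $\sum_{k\neq j}g_k^2$ (both positive semidefinite) gives $C(g_j^2)\subseteq C(f)$, and the first part gives $C(g_j^2)=2C(g_j)$, so $C(g_j)\subseteq\tfrac12 C(f)$.

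I expect the sign/asymptotic lemma to be the only genuine obstacle, since it is the single place where positive semidefiniteness enters. The delicate point is choosing the separating functional $c$ so that $v$ is the strict maximizer over the \emph{support} rather than merely over the polytope, and arranging the monomial substitution $x_i=\varepsilon_i t^{c_i}$ to be well defined; once the dominant-term domination is set up, the sign-flipping contradiction is routine.
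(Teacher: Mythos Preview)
The paper does not give its own proof of this statement: it is quoted verbatim as \citep[Theorem 1]{Reznick78} and used as a black box, so there is no in-paper argument to compare against. Your proposal is a correct self-contained proof and follows the classical Newton-polytope route that Reznick's original paper uses: the key ingredient is precisely your ``sign/asymptotic lemma'' that at every vertex $v$ of $C(h)$ a PSD polynomial $h$ has $v\in(2\NN)^n$ and $h_v>0$, from which non-cancellation at the vertices of $\text{conv}(C(f)\cup C(g))$ and hence $C(f)\subseteq C(f+g)$ follow, and then the SOS containment is immediate via $C(g_j^2)=2C(g_j)$.

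One cosmetic remark: your parenthetical about ``translating $c$ so the substitution below is legitimate'' is unnecessary and slightly misleading. Translating $c$ by a multiple of $(1,\dots,1)$ does not in general preserve the strict separation (different $\alpha$ have different $|\alpha|$), but you do not need positive $c_i$ anyway: for $t>0$ the substitution $x_i=\varepsilon_i t^{c_i}$ is perfectly well defined even with negative integer $c_i$, and the dominant-exponent argument goes through unchanged. With that clarification removed, the proof is clean.
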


\begin{define}
Given a  polynomial $f\in\RR[X]$, an integer $e\ge 0$ and a subset
$\mathcal{T}\subseteq \calT{X}{e}$, let $\cal{C}_{f,\mathcal{T}}$ be the
convex hull of $\{\alpha\in \NN^n\ |\
\alpha=\beta+\gamma_1+\gamma_2, X^{\beta}\in\text{supp}(f),
X^{\gamma_1}, X^{\gamma_2}\in \mathcal{T}\}$.  We define
$\mathcal{G}_{f,\mathcal{T}}=\{X^\alpha|\ 2\alpha\in
\cal{C}_{f,\mathcal{T}}\}$. 
If $\mathcal{T}=\calT{X}{e}$, we 
write the shorthand $\mathcal{G}_{f,\deg\le e}$.
\end{define}
By Theorem \ref{th:sparsity}, we have
$m_{\cal{G}_{f,\mathcal{T}}}\subseteq m_{\calT{X}{d}}$ and that
$f \in \RSOS{\mathcal{T}}$ if and only if
\begin{equation*}
0={m_{\cal{G}_{f,\mathcal{T}}}}^TW^{[1]}m_{\cal{G}_{f,\mathcal{T}}}+
(-f(X))\cdot {m_{\mathcal{T}}}^T W^{[2]} m_{\mathcal{T}}.
\end{equation*}
Thus the sizes of the SDPs (\ref{primal}) and (\ref{dual})
decrease. We show a sparse version of Theorem \ref{th::main2} below.

\begin{cor}\label{LinFormCor2}
Given a polynomial $f\in \QQ[X]$ and an integer $e\ge 0$, let
 $d=\lceil e+\deg(f)/2\rceil$,
  then for any subset $\mathcal{T}\subseteq \calT{X}{e}$,
the following are equivalent:
\begin{enumerate}[\upshape 1.]
\item $f\notin\RSOS{\mathcal{T}}$, 
\item There exists a rational vector $\hat{y}\in \QQ^{m}$
and the associated linear form $L_{\hat{y}}\in {(\RR[X]_{2d})}^*$ 
such that for any polynomials $v, u\in\RR[X]$ with $\text{supp}(v)\in \mathcal{T}$
and $\text{supp}(u)\in \mathcal{G}_{f,\mathcal{T}}$, we have
$L_{\hat{y}}(fv^2)< 0$ and $L_{\hat{y}}(u^2)\ge 0$, 
\end{enumerate}
where $m$ is the number of  elements in the set
$\{X^{\alpha+\beta}\ |\ X^{\alpha}, X^{\beta} \in
\mathcal{G}_{f,\mathcal{T}}\}$.
\end{cor}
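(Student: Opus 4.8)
The plan is to recognize the corollary as the ``sparse'' specialization of Theorem~\ref{th::main2}, obtained by replacing the full monomial vector $m_{\calT{X}{d}}$ with the smaller vector $m_{\mathcal{G}_{f,\mathcal{T}}}$ throughout the SDP formulation and then re-running the duality argument verbatim. First I would record the sparse reformulation stated just before the corollary: by Reznick's Theorem~\ref{th:sparsity}, $f\in\RSOS{\mathcal{T}}$ if and only if there exist $W^{[1]}\succeq 0$ indexed by $\mathcal{G}_{f,\mathcal{T}}$ and $W^{[2]}\succeq 0$ indexed by $\mathcal{T}$ with $m_{\mathcal{G}_{f,\mathcal{T}}}^T W^{[1]} m_{\mathcal{G}_{f,\mathcal{T}}}=f\cdot m_{\mathcal{T}}^T W^{[2]} m_{\mathcal{T}}$. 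The ``$\Leftarrow$'' direction is immediate (factoring the two Gram matrices exhibits a representation in $\RSOS{\mathcal{T}}$). For ``$\Rightarrow$'', given $\sum u_i^2=f\sum v_j^2$ with $\text{supp}(v_j)\subseteq\mathcal{T}$, I would confine each numerator square to $\mathcal{G}_{f,\mathcal{T}}$ by a Newton-polytope argument: since $C(f\sum v_j^2)\subseteq C(f)+C(\sum v_j^2)\subseteq\mathcal{C}_{f,\mathcal{T}}$ and $\sum u_i^2=f\sum v_j^2$, Theorem~\ref{th:sparsity} yields $C(u_i)\subseteq\tfrac{1}{2}C(\sum u_i^2)\subseteq\tfrac{1}{2}\mathcal{C}_{f,\mathcal{T}}$, i.e.\ $2\alpha\in\mathcal{C}_{f,\mathcal{T}}$ for every $X^\alpha\in\text{supp}(u_i)$, which is exactly $\text{supp}(u_i)\subseteq\mathcal{G}_{f,\mathcal{T}}$.

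With this reformulation in hand, the block SDP~(\ref{primal}) and its dual~(\ref{dual}) transcribe directly with $\mathcal{G}_{f,\mathcal{T}}$ in place of $\calT{X}{d}$: the numerator moment matrix $M_d(y)$ is replaced by its principal submatrix indexed by $\mathcal{G}_{f,\mathcal{T}}$, and the moment vector $y$ now ranges only over the index set $\{X^{\alpha+\beta}\mid X^\alpha,X^\beta\in\mathcal{G}_{f,\mathcal{T}}\}$ of cardinality $m$. Next I would verify that Lemma~\ref{lem::SF} still applies: the strictly feasible $\tilde y$ constructed there yields a positive definite restricted numerator moment matrix, because that matrix is a principal submatrix of $M_d(\tilde y)\succ 0$ and hence is itself $\succ 0$, while the localizing block $M_e(-f\tilde y)+\tilde sI\succ 0$ is unchanged. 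Thus the hypothesis of the Semidefinite Farkas Lemma~\ref{lem::Farkas} continues to hold for the sparse primal/dual pair.

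Then I would repeat the proofs of Theorems~\ref{th::main1} and~\ref{th::main2} line by line on the sparse SDP. The Semidefinite Farkas Lemma~\ref{lem::Farkas} gives that $f\notin\RSOS{\mathcal{T}}$ iff the sparse dual has a feasible $(y',s')$ with $s'<0$; convex-combining with $\tilde p$ and rounding exactly as in Theorem~\ref{th::main1} produces a rational $\hat y\in\QQ^m$ with the restricted $M_{\mathcal{G}_{f,\mathcal{T}}}(\hat y)\succeq 0$ and $M_e(f\hat y)\prec 0$. Finally, Lemma~\ref{lem::linearform} converts these matrix inequalities into the stated linear-form conditions: $M_{\mathcal{G}_{f,\mathcal{T}}}(\hat y)\succeq 0$ reads $L_{\hat y}(u^2)=\text{vec}(u)^T M(\hat y)\text{vec}(u)\ge 0$ for all $u$ with $\text{supp}(u)\subseteq\mathcal{G}_{f,\mathcal{T}}$, while $M_e(f\hat y)\prec 0$ reads $L_{\hat y}(fv^2)=\text{vec}(v)^T M(f\hat y)\text{vec}(v)<0$ for all nonzero $v$ with $\text{supp}(v)\subseteq\mathcal{T}$, invoking item~2 of Lemma~\ref{lem::linearform} to absorb $f$ into the moment matrix.

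I expect the only genuine content, and hence the main obstacle, to be the sparse reformulation in the first paragraph: verifying via Theorem~\ref{th:sparsity} that the numerator squares can be confined to $\mathcal{G}_{f,\mathcal{T}}$, and the bookkeeping that the reduced index set $\{X^{\alpha+\beta}\}$ simultaneously supports both the numerator moment matrix and the localizing matrix $M_e(f\hat y)$ (one must check that every exponent $\beta_0+\gamma_1+\gamma_2$ with $X^{\beta_0}\in\text{supp}(f)$ and $X^{\gamma_1},X^{\gamma_2}\in\mathcal{T}$ is indeed tracked by $\hat y$). Once that is settled, everything else is a mechanical reprise of the earlier duality and rationality arguments.
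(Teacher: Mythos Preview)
Your proposal is correct and mirrors the paper's approach exactly. In fact the paper gives no separate proof of the corollary: it simply records the sparse reformulation via Theorem~\ref{th:sparsity} and then states the corollary as the evident analogue of Theorem~\ref{th::main2}, leaving implicit precisely the verification steps (strict feasibility of the reduced dual, Farkas, rational rounding, and translation through Lemma~\ref{lem::linearform}) that you spell out.
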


\section{Hilbert-Artin representation of positive semidefinite rational
functions}\label{sec::rafuns}

We generalize our method for solving 
 the following problem: Given a rational function
$f/g\in\QQ(X)$ with $g(X)\succeq 0$ 
an integer $e\ge 0$ and
$\mathcal{T}\subseteq \calT{X}{d}$, 
 certify $f/g\notin
\RSOS{\mathcal{T}}$.

Consider the following set
\begin{equation}\label{optproblem2}
        \left\{ \begin{array}{c|c}[W^{[1]},W^{[2]}]&
        \begin{aligned}
         &\ g(X)\cdot{m_{\calT{X}{d}}}^TW^{[1]}m_{\calT{X}{d}}=f(X)
     \cdot {m_{\mathcal{T}}}^T W^{[2]} m_{\mathcal{T}}\\
     &\ W^{[1]} \succeq 0,\ W^{[2]}\succeq 0,\ \text{Tr}(W^{[2]})=1
        \end{aligned}
    \end{array}
    \right\},
\end{equation}
where $d=e+(\lceil\deg(f)-\deg(g)\rceil)/2$.

\begin{prop}
We have $f/g\notin \RSOS{\mathcal{T}}$ if and only if the set
{\upshape(\ref{optproblem2})} is empty.
\end{prop}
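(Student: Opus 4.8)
The plan is to establish the contrapositive equivalence, namely that $f/g \in \RSOS{\mathcal{T}}$ if and only if the set \textup{(\ref{optproblem2})} is nonempty. The argument parallels the polynomial case treated earlier, the only structural change being that clearing denominators in $f/g = \sum_i u_i^2/\sum_j v_j^2$ now carries the factor $g(X)$ onto the numerator side rather than leaving a bare $f$.

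First I would recast membership in $\RSOS{\mathcal{T}}$ as a polynomial identity. Since $g \not\equiv 0$ and every admissible denominator $\sum_j v_j^2 \not\equiv 0$, the equality of rational functions $f/g = \sum_i u_i^2/\sum_j v_j^2$ is equivalent to
\[
  g(X)\cdot \sum_i u_i^2 \;=\; f(X)\cdot \sum_j v_j^2,
\]
where $\text{supp}(v_j)\subseteq \mathcal{T}$. Hence $f/g\in\RSOS{\mathcal{T}}$ iff there exist sum-of-squares polynomials $S_u=\sum_i u_i^2$ and $S_v=\sum_j v_j^2\not\equiv 0$ with $\text{supp}(v_j)\subseteq\mathcal{T}$ satisfying $g\,S_u = f\,S_v$.

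The heart of the proof is the degree bound guaranteeing $\deg(u_i)\le d$, which is precisely what the choice of $d$ is designed to deliver. From the elementary fact that a nonzero sum of squares has degree equal to twice the largest degree of its summands (its top-degree part is again a sum of squares of the leading forms and so cannot vanish; cf.\ Theorem~\ref{th:sparsity}), the identity $g\,S_u=f\,S_v$ gives $\deg(S_u)=\deg(f)-\deg(g)+\deg(S_v)$. Because $\text{supp}(v_j)\subseteq\calT{X}{e}$ forces $\deg(S_v)\le 2e$, I obtain $\max_i\deg(u_i)=\tfrac12\deg(S_u)\le e+(\deg(f)-\deg(g))/2\le d$, so every $u_i$ has support in $\calT{X}{d}$. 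The degenerate case $S_u\equiv 0$ (only possible when $f\equiv 0$) is trivial, taking all $u_i=0$.

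Finally I would translate between sum-of-squares polynomials and positive semidefinite Gram matrices in the usual way. The supports of the $u_i$ lie in $\calT{X}{d}$, so $S_u = m_{\calT{X}{d}}^T W^{[1]} m_{\calT{X}{d}}$ for some $W^{[1]}\succeq 0$; likewise $S_v = m_{\mathcal{T}}^T W^{[2]} m_{\mathcal{T}}$ for some $W^{[2]}\succeq 0$, and $g\,S_u=f\,S_v$ is exactly the first constraint of \textup{(\ref{optproblem2})}. Since $S_v\not\equiv 0$ we have $W^{[2]}\neq\mathbf{0}$, hence $\Tr(W^{[2]})>0$; scaling both Gram matrices by the common positive factor $1/\Tr(W^{[2]})$ preserves both the identity and semidefiniteness while enforcing $\Tr(W^{[2]})=1$, placing $[W^{[1]},W^{[2]}]$ in \textup{(\ref{optproblem2})}. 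The converse runs the same chain backwards: a feasible $[W^{[1]},W^{[2]}]$ yields sum-of-squares $S_u,S_v$ with the prescribed supports, $\Tr(W^{[2]})=1$ forces $S_v\not\equiv 0$, and the constraint gives $g\,S_u=f\,S_v$, so that $f/g = S_u/S_v\in\RSOS{\mathcal{T}}$. I expect the only real obstacle to be the degree bound, so I would verify that the stated $d$ dominates $\tfrac12\deg(S_u)$ in every sign regime of $\deg(f)-\deg(g)$, the ceiling in the definition of $d$ ensuring that the resulting bound is an integer.
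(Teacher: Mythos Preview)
The paper states this proposition without proof, treating it (like its polynomial analogue in Section~\ref{sec::polys}) as immediate from the standard Gram-matrix characterization of sums of squares together with the normalization $\Tr(W^{[2]})=1$ that rules out the trivial denominator. Your argument is correct and supplies precisely the details the paper suppresses: the cross-multiplication $g\,S_u=f\,S_v$, the degree computation $\deg(S_u)=\deg(f)-\deg(g)+\deg(S_v)\le 2d$ showing that the numerator squares have support in $\calT{X}{d}$, and the rescaling to enforce the trace constraint. There is nothing to compare against beyond this; your write-up simply makes explicit what the authors leave to the reader.
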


  Let
\begin{equation}\label{TwoSets}
\begin{aligned}
&\Gamma_1:=\left\{X^{\alpha+\beta+\gamma}\ |\ X^{\gamma}\in \text{supp}(g), X^{\alpha},X^{\beta}\in\calT{X}{d}\right\},\\
&\Gamma_2:=\left\{X^{\alpha+\beta+\gamma}\ |\ X^{\gamma}\in \text{supp}(f), X^{\alpha},X^{\beta}\in \mathcal{T}\right\}.
\end{aligned}
\end{equation}
We assume that $\Gamma_1\supseteq \Gamma_2$, otherwise $f/g\notin
\RSOS{\mathcal{T}}$. The following analysis is similar to the one given
in Section \ref{sec::polys}.

The  primal block SDP considered here  has the   same form as
(\ref{primal}) but we use
\begin{equation}\label{newG}
     g(X)\cdot{m_{\calT{X}{d}}}^T W^{[1]} m_{\calT{X}{d}}=
     \underset{\alpha}{\sum}(G^{[\alpha]}\bullet
     W^{[1]})X^{\alpha}
\end{equation}
to define   matrices $G^{[\alpha]}$.  
Its  dual problem is
\begin{equation}\label{dual2}
        \begin{aligned}
             s^*:=\underset{(y,s)\in \RR^{m+1}}{\inf} & \ s\\
                s.t.  &\ M(y,s)\succeq 0,\\
        \end{aligned}
\end{equation}
where
\[
    M(y,s):=\left[\begin{array}{cc}M_d(gy) & \\ &
    M_e((-f)y)+sI\end{array}\right],
\]
$y:=(y_{\alpha})_{\alpha \in \NN_{2d}^n}\in \RR^{\NN_{2d}^n}$ and
$m$ is the number of elements in the set $\Gamma_1$. $M_d(gy)$ and
$M_e((-f)y)$ are localizing moment matrices. Similar to Lemma
\ref{lem::SF}, we have
\begin{lemma}\label{lem::SF::rafuns}
There exists $\tilde{p}=(\tilde{y},\tilde{s})$ such that $M_d(g\tilde{y})\succ 0$
and $M_e(-f\tilde{y})+\tilde{s}I\succ 0$.
\end{lemma}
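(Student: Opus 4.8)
The plan is to mimic the proof of Lemma~\ref{lem::SF}, the only change being that the ordinary moment matrix $M_d(\tilde y)$ is replaced by the localizing moment matrix $M_d(g\tilde y)$, and that positivity of the relevant integral is now inherited from the hypothesis $g\succeq 0$. First I would fix a probability measure $\mu$ on $\RR^n$ with a strictly positive density $h$ with respect to Lebesgue measure and with all moments finite (a Gaussian density will do), and set $\tilde y_\alpha := \int X^\alpha\, d\mu$. This guarantees that every entry of $M_d(g\tilde y)$ and of $M_e((-f)\tilde y)$ is a finite integral, so both matrices are well-defined real symmetric matrices.

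Next I would establish $M_d(g\tilde y)\succ 0$. For a polynomial $q$ with $\text{supp}(q)\in\calT{X}{d}$, part~2 of Lemma~\ref{lem::linearform}, applied with $g$ as the localizing factor, gives
\[
\text{vec}(q)^T M_d(g\tilde y)\,\text{vec}(q)=L_{\tilde y}(g\,q^2)=\int g(x)\,q(x)^2\,h(x)\,dx.
\]
Since $g\succeq 0$ we have $g(x)\ge 0$ for all $x\in\RR^n$, and since $g$ is a nonzero polynomial its zero set has Lebesgue measure zero, so $g>0$ on a set of full measure. Together with $h>0$ everywhere and $q(x)^2>0$ off the measure-zero zero set of $q$, the integrand is strictly positive on a set of positive measure whenever $q\neq 0$. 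Hence the integral is positive for every $q\neq 0$, which is precisely $M_d(g\tilde y)\succ 0$.

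The second inequality is then obtained verbatim as in Lemma~\ref{lem::SF}: the matrix $M_e((-f)\tilde y)$ is a fixed real symmetric matrix, so choosing any $\tilde s>-\lambda_{\text{min}}(M_e((-f)\tilde y))$ shifts all its eigenvalues strictly positive, giving $M_e(-f\tilde y)+\tilde s I\succ 0$. Setting $\tilde p=(\tilde y,\tilde s)$ completes the construction.

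The one step I would check most carefully is the strict positivity of the integral, since this is where the rational-function case genuinely differs from the polynomial case. It rests on two points: that $g\succeq 0$ forces $g$ to be nonnegative as a function, and that a nonzero polynomial vanishes only on a set of measure zero, so that ``positive semidefinite and nonzero'' upgrades to ``positive almost everywhere.'' Were $g$ permitted to be identically zero the argument would collapse, but $g$ is the denominator of $f/g$ and hence is a nonzero polynomial, so the argument goes through; everything else is a routine transcription of the earlier proof.
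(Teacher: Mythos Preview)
Your proof is correct and follows essentially the same approach as the paper's own proof, which also takes $\tilde y$ to be the moment sequence of a probability measure with strictly positive density, computes the quadratic form as $\int g(x)q(x)^2 h(x)\,dx$, and then shifts by $\tilde s>-\lambda_{\min}(M_e((-f)\tilde y))$. In fact you are slightly more careful than the paper in justifying why the integral is strictly positive, spelling out that the zero set of a nonzero polynomial has Lebesgue measure zero.
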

\begin{proof}
Taking $\tilde{y}_{\alpha}$ to be the one defined in the proof of Lemma
\ref{lem::SF}, since $g(X)$ is nonnegative, for any polynomial $q(X)\in\RR[X]$
with $\text{supp}(q)\in \calT{X}{d}$, we have
\begin{equation*}
\begin{aligned}
\langle \text{vec}(q)^T,M_d(g\tilde{y})\text{vec}(q)\rangle&=\int g(x)q(x)^2\mu(dx)\\
&=\int g(x)q(x)^2h(x)dx\\
&>0\ \text{whenever}\ q\neq 0, 
\end{aligned}
\end{equation*}
which implies $M_d(g\tilde{y})\succ 0$. 
Take
\[
\tilde{s}>-\lambda_{\text{min}}(M_e((-f)\tilde{y})),
\]
then $M_e(-f\tilde{y})+\tilde{s}I\succ 0$.
\end{proof}

Based on the Farkas 
Lemma~\ref{lem::Farkas}
and Lemma \ref{lem::SF::rafuns},
 similar to Theorem \ref{th::main1} and Theorem \ref{th::main2}, we have the following
 results.
\begin{theorem}\label{th::main1::rafuns}
Given a rational function $f/g\in \QQ(X)$ with $g(X)\succeq 0$ and an
integer $e\ge 0$, let $d=e+(\lceil\deg(f)-\deg(g)\rceil)/2$, then
for any subset $\mathcal{T}\subseteq \calT{X}{e}$, the following are
equivalent:
\begin{enumerate}[\upshape 1.]
\item $f/g\notin \RSOS{\mathcal{T}}$,
\item $\Gamma_1\nsupseteq\Gamma_2$ in {\upshape(\ref{TwoSets})}, or
      there exists a rational vector $\hat{y}=(\hat{y}_\alpha)\in \QQ^{m}$
      such that $M_d(g\hat{y})\succeq 0$ and $M_e(f\hat{y})\prec 0$
      in {\upshape(\ref{dual2})},
\end{enumerate}
where $m$ is the number of elements in the set $\Gamma_1$.
\end{theorem}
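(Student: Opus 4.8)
The plan is to mirror the proofs of Theorems~\ref{th::main1} and~\ref{th::main2} almost verbatim, the only structural changes being those forced by the denominator $g$: the ordinary truncated moment matrix $M_d(\hat y)$ is everywhere replaced by the localizing moment matrix $M_d(g\hat y)$, and the support containment $\Gamma_1\supseteq\Gamma_2$ must be singled out as a separate case, which is exactly why statement~2 is phrased as a disjunction.

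First I would dispose of the degenerate case. If $\Gamma_1\nsupseteq\Gamma_2$, then some monomial arising on the right-hand side of the defining identity in~(\ref{optproblem2}) through $f\cdot m_{\mathcal{T}}^T W^{[2]} m_{\mathcal{T}}$ cannot be matched by any monomial produced by $g\cdot m_{\calT{X}{d}}^T W^{[1]} m_{\calT{X}{d}}$; forcing the coefficient of that monomial to vanish drives $W^{[2]}=\mathbf{0}$, contradicting $\Tr(W^{[2]})=1$. Hence~(\ref{optproblem2}) is empty and $f/g\notin\RSOS{\mathcal{T}}$, so both statements hold and the equivalence is trivial here. From now on I assume $\Gamma_1\supseteq\Gamma_2$, so by the proposition characterizing emptiness of~(\ref{optproblem2}), the assertion $f/g\notin\RSOS{\mathcal{T}}$ is equivalent to infeasibility of the primal block SDP having the form~(\ref{primal}) but built from the matrices $G^{[\alpha]}$ of~(\ref{newG}).

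Next I would apply the Semidefinite Farkas Lemma~\ref{lem::Farkas} to this primal SDP and its dual~(\ref{dual2}). Its hypothesis---existence of a strictly feasible dual point---is supplied precisely by Lemma~\ref{lem::SF::rafuns}, whose positivity $M_d(g\tilde y)\succ 0$ rests on $g\succeq 0$ via $\int g(x)q(x)^2h(x)\,dx>0$. The Farkas dichotomy then states that the primal is infeasible if and only if there is a real certificate $(y',s')$ with $M(y',s')\succeq 0$ and $s'<0$. Reading off the block-diagonal form of $M(y',s')$, the condition $M_e((-f)y')+s'I\succeq 0$ with $s'<0$ gives $M_e(fy')\preceq s'I\prec 0$, matching the sign pattern claimed in statement~2.

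It remains to rationalize the certificate, exactly as in Theorem~\ref{th::main1}: I would convex-combine $(y',s')$ with the strictly feasible $(\tilde y,\tilde s)$ of Lemma~\ref{lem::SF::rafuns} to obtain an interior point $(\bar y,\bar s)$ with $M(\bar y,\bar s)\succ 0$ and, choosing the rational mixing parameter small, $\bar s<0$; then perturb to a rational $(\hat y,\hat s)$ in a ball of radius $<\tfrac12|\bar s|$, which preserves $M_d(g\hat y)\succeq 0$, $M_e(f\hat y)\prec 0$, and gives $\hat y\in\QQ^m$. The easy reverse implication can be verified directly, as in the remark after Theorem~\ref{th::main2}: were $f/g=\sum u_i^2/\sum v_j^2$ with $\text{supp}(v_j)\subseteq\mathcal{T}$, then $g\sum u_i^2=f\sum v_j^2$, and applying $L_{\hat y}$ together with Lemma~\ref{lem::linearform} yields $0\le\sum L_{\hat y}(g\,u_i^2)=\sum L_{\hat y}(f\,v_j^2)<0$, a contradiction. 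The only genuinely new obstacle relative to the polynomial case is securing strict positive definiteness of the \emph{localizing} matrix $M_d(g\tilde y)$; this is precisely where $g\succeq 0$ is indispensable, and it is handled by Lemma~\ref{lem::SF::rafuns}.
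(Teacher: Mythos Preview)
Your main argument---applying the Semidefinite Farkas Lemma~\ref{lem::Farkas} with strict dual feasibility supplied by Lemma~\ref{lem::SF::rafuns}, then rationalizing via the convex-combination trick from the proof of Theorem~\ref{th::main1}---is exactly what the paper intends: it gives no explicit proof here, only the remark that the result follows ``similar to Theorem~\ref{th::main1} and Theorem~\ref{th::main2},'' and you have correctly instantiated that template, including the reverse implication via the linear form $L_{\hat y}$.

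Your treatment of the degenerate disjunct, however, contains a gap. The claim that forcing one unmatched monomial coefficient to vanish ``drives $W^{[2]}=\mathbf 0$'' is unjustified: a single linear constraint $H^{[\delta]}\bullet W^{[2]}=0$ does not kill a nonzero PSD matrix. Concretely, with $n=1$, $g=X^2$, $f=X^4+1$, $\mathcal T=\{1,X\}$, $e=1$, $d=2$, one has $1\in\Gamma_2\setminus\Gamma_1$, yet the set~(\ref{optproblem2}) is nonempty (take $W^{[2]}=\bigl[\begin{smallmatrix}0&0\\0&1\end{smallmatrix}\bigr]$ and $W^{[1]}=\bigl[\begin{smallmatrix}1&0&0\\0&0&0\\0&0&1\end{smallmatrix}\bigr]$), and indeed $f/g=((X^2)^2+1^2)/X^2\in\RSOS{\mathcal T}$. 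The paper itself merely asserts ``otherwise $f/g\notin\RSOS{\mathcal T}$'' without argument, so this is a loose end you share with the source rather than a divergence from it; but the specific mechanism you invoked does not actually work.
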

With a view towards linear forms in $\RR[X]$, Theorem \ref{th::main1::rafuns}
is equivalent to
\begin{theorem}\label{th::main2::rafuns}
Given a rational function $f/g\in \QQ(X)$ with $g(X)\succeq 0$ and an
integer $e\ge 0$, let $\RR[X]_{2d+\deg(g)}:=\{p\in\RR[X]\ |\
\text{supp}(p)\in \calT{X}{2d+\deg(g)}\}$ where
$d=e+(\lceil\deg(f)-\deg(g)\rceil)/2$, then for any subset
$\mathcal{T}\subseteq \calT{X}{e}$, the following are equivalent:
\begin{enumerate}[\upshape 1.]
\item $f/g\notin\RSOS{\mathcal{T}}$,
\item $\Gamma_1\nsupseteq\Gamma_2$ in {\upshape(\ref{TwoSets})}, or
there exists a rational vector $\hat{y}\in \QQ^{m}$ and the associated linear form
$L_{\hat{y}}\in {(\RR[X]_{2d+\deg(g)}})^*$, such that for any polynomials
$v, u\in\RR[X]$ with $\text{supp}(v)\in \mathcal{T}$ and $\text{supp}(u)\in \calT{X}{d}$,
we have $L_{\hat{y}}(fv^2)< 0$ and $L_{\hat{y}}(gu^2)\ge 0$,
\end{enumerate}
where $m$ is the number of elements in the set $\Gamma_1$.
\end{theorem}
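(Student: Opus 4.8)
The plan is to deduce Theorem \ref{th::main2::rafuns} from the moment-matrix characterization already furnished by Theorem \ref{th::main1::rafuns}, in exactly the way Theorem \ref{th::main2} was obtained from Theorem \ref{th::main1}: the only genuinely new feature is that \emph{both} diagonal blocks of the dual matrix in (\ref{dual2}) are now localizing moment matrices, so the passage to linear forms must invoke the localizing form of Lemma \ref{lem::linearform} on each block rather than on only one. Theorem \ref{th::main1::rafuns} asserts that $f/g \notin \RSOS{\mathcal{T}}$ holds if and only if either $\Gamma_1 \nsupseteq \Gamma_2$, or there is a rational $\hat{y} \in \QQ^{m}$ with $M_d(g\hat{y}) \succeq 0$ and $M_e(f\hat{y}) \prec 0$. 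The degenerate disjunct $\Gamma_1 \nsupseteq \Gamma_2$ occurs verbatim in the statement to be proved, so it transfers with no work, and the whole task reduces to showing that the pair of semidefiniteness conditions on the two localizing matrices is equivalent to the pair of linear-form inequalities.

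First I would fix the rational vector $\hat{y}$ together with its associated linear form $L_{\hat{y}}$, and record the degree bookkeeping: since $d = e + (\lceil \deg(f) - \deg(g)\rceil)/2$, every product $g u^2$ with $\text{supp}(u) \in \calT{X}{d}$ and every product $f v^2$ with $\text{supp}(v) \in \mathcal{T} \subseteq \calT{X}{e}$ has total degree at most $2d + \deg(g)$, so $L_{\hat{y}} \in (\RR[X]_{2d+\deg(g)})^*$ is only ever evaluated on polynomials lying inside its declared domain. Next I would apply part~2 of Lemma \ref{lem::linearform} twice. Taking the three factors to be $u, u, g$ yields $L_{\hat{y}}(gu^2) = \text{vec}(u)^T M(g\hat{y}) \text{vec}(u)$, and restricting the indices to $\calT{X}{d}$ this is precisely $\text{vec}(u)^T M_d(g\hat{y}) \text{vec}(u)$; hence $M_d(g\hat{y}) \succeq 0$ is equivalent to $L_{\hat{y}}(gu^2) \ge 0$ for all $u$ with $\text{supp}(u) \in \calT{X}{d}$. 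Taking the factors to be $v, v, f$ yields $L_{\hat{y}}(fv^2) = \text{vec}(v)^T M_e(f\hat{y}) \text{vec}(v)$, so the strict condition $M_e(f\hat{y}) \prec 0$ is equivalent to $L_{\hat{y}}(fv^2) < 0$ for every nonzero $v$ supported on $\mathcal{T}$. Assembling these two equivalences with Theorem \ref{th::main1::rafuns} delivers the stated result.

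The step requiring the most care is the localizing identity for the $g$-weighted block, namely $L_{\hat{y}}(gu^2) = \text{vec}(u)^T M_d(g\hat{y}) \text{vec}(u)$. Unlike the polynomial case of Theorem \ref{th::main2}, where the numerator block was the plain moment matrix $M_d(\hat{y})$ and part~1 of Lemma \ref{lem::linearform} already sufficed, here the weight $g$ forces the use of the localizing form, and one must verify that the matrix $M_d(g\hat{y})$ arising from definition (\ref{newG}) and appearing in the dual (\ref{dual2}) coincides with the infinite localizing matrix $M(g\hat{y})$ truncated to order $d$. A secondary point is matching strict and non-strict inequalities correctly: the strict definiteness $\prec 0$ corresponds to $L_{\hat{y}}(fv^2) < 0$ only for nonzero $v$, which must be stated explicitly, whereas $\succeq 0$ corresponds to the weak inequality $L_{\hat{y}}(gu^2) \ge 0$, trivially satisfied at $u = 0$. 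Neither of these is a deep obstacle, but they are the places where an otherwise template-following argument can slip.
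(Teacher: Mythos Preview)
Your proposal is correct and follows exactly the route the paper takes: the paper simply declares Theorem~\ref{th::main2::rafuns} equivalent to Theorem~\ref{th::main1::rafuns} via the linear-form interpretation, in parallel with how Theorem~\ref{th::main2} was obtained from Theorem~\ref{th::main1} by invoking Lemma~\ref{lem::linearform}. Your write-up is in fact more explicit than the paper's, correctly flagging that both blocks now require the localizing identity from part~2 of Lemma~\ref{lem::linearform} and that the strict inequality on the $f$-block only makes sense for nonzero $v$.
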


\section{Examples and Experiments}

\begin{example}\label{ex::Motzkin}
We prove that the well known Motzkin polynomial
\[
f(X_1,X_2)=X_1^4X_2^2+X_1^2X_2^4+1-3X_1^2X_2^2
\]
is not an SOS. 
 Set $n=2$, $e=0$ and $d=3$. By exploiting the
sparsity, we have 
$\mathcal{G}_{f,\deg\le 0}=\{1,X_1X_2,X_1^2X_2,X_1X_2^2\}$.
According to Corollary \ref{LinFormCor2}, $m=10$ and we need to
find a rational sequence $\hat{y}\in \QQ^{10}$ or its associated
linear form $L_{\hat{y}}\in {(\RR[X]_{6})}^*$ such that for any polynomial
$u\in\RR[X]$
with 
$\text{supp}(u)\in \mathcal{G}_{f,\deg\le 0}$,
we have $L_{\hat{y}}(u^2)\ge 0$ and $L_{\hat{y}}(f)< 0$. The certificate we obtained 
is
\begin{equation*}
\begin{aligned}
\hat{y}=&(\hat{y}_{0,0}=\hat{y}_{1,1}=\hat{y}_{1,2}=0,\hat{y}_{2,2}=300,
\hat{y}_{3,2}=\hat{y}_{2,3}=\hat{y}_{4,2}=\hat{y}_{3,3}=\hat{y}_{2,4}=0).
\end{aligned}
\end{equation*}
Its associated linear form $L_{\hat{y}}$ satisfies 
$$L_{\hat{y}}(u^2)=300u_{1,1}^2\ge 0,$$ and
$$L_{\hat{y}}(f)=-3\times 300=-900<0,$$
which implies $f$ can not be written as an SOS.
\end{example}

\begin{example}
In \citeP{KYZ09}, the 
monotone column permanent (MCP) conjecture has been  proven 
for
dimension 4 via certifying  polynomials  $p_{1,1}, p_{1,2}$,
$p_{1,3}, p_{2,2}, p_{2,3}, p_{3,3}$ of degree 8  in   8 variables
to be \nonnegative,
see \citeP{KYZ09} for the explicit forms of
these polynomials.  Among them, the polynomials $p_{1,1}, p_{3,3}$
are perfect squares. Applying the hybrid symbolic-numeric
algorithm in \citeP{KLYZ09}, they proved that the polynomial
$p_{1,3}$ can be written as an SOS  and the polynomials
$p_{1,2},p_{2,2}, p_{2,3}$ can be written as an SOS divided by
weighted sums of squares of variables. We certify that all
polynomials $p_{1,2},p_{2,2}, p_{2,3}$ can not be written as an
SOS via finding the corresponding certificates $\hat{y}\in \QQ^m$ and the
associated linear forms $L_{\hat{y}}\in {(\RR[X]_{8})}^*$.
 By exploiting the sparsity,
for $p_{1,2}$, the matrices $W$, $M(y,s)$ in (\ref{primal}),
(\ref{dual}) are of dimension $24\times 24$ and $m=189$. For
$p_{2,2}$, $W$ and $M(y,s)$ are of dimension $29\times 29$ and
$m=255$.   For $p_{2,3}$, $W$ and $M(y,s)$ are of dimension
$39\times 39$ and $m=372$.
\end{example}

\begin{example}\label{ex:ess}
This example comes from the even symmetric sextics in \citeP{ESS}. Let
\begin{equation*}
M\ess{n}{r}(X)=\overset{n}{\underset{i=1}{\sum}}X_i^r,
\end{equation*}
 for integers $k$, $0\le k\le n-1$, we define polynomials  $f_{n,k}$ by
\begin{equation*}
f\ess{n}{0}=-nM\ess{n}{6}+(n+1)M\ess{n}{2}M\ess{n}{4}-M\ess{n}{2}^3,
\end{equation*}
and
\begin{equation*}
f\ess{n}{k}=(k^2+k)M\ess{n}{6}-(2k+1)M\ess{n}{2}M\ess{n}{4}+M\ess{n}{2}^3,\ 1\le k\le n-1.
\end{equation*}
Some interesting results about these polynomials have been given
in \citeP{ESS}.
\begin{prop}\label{prop::ESS}
For $n\ge 3$,
\begin{enumerate}[{\upshape (1)}]
\item all $f\ess{n}{k}$, $0\le k\le n-1$, are \nonnegative
polynomials;
\item the polynomials  $f\ess{n}{0}$ and
$f\ess{n}{1}$ are SOS; \item the polynomials
$f\ess{n}{2},\ldots,f\ess{n}{n-1}$ are not SOS; \item
$M\ess{3}{2}\cdot f\ess{3}{2}$ is an SOS\/ {\upshape\citep{SPDNSOSRobinson};}
      $M\ess{4}{2}^2 \cdot f\ess{4}{2}$ is an SOS;
\item for $n\ge 4$, $M\ess{n}{2}\cdot f\ess{n}{n-1}$ is an  SOS.
\end{enumerate}
\end{prop}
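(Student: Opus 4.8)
The plan is to collapse all five parts to the nonnegative orthant through the even substitution $x_i=X_i^2\ge 0$, which turns every member of the family into a symmetric \emph{cubic} form. Writing $P_1=\sum_i x_i$, $P_2=\sum_i x_i^2$, $P_3=\sum_i x_i^3$ one has $M\ess{n}{2}\mapsto P_1$, $M\ess{n}{4}\mapsto P_2$, $M\ess{n}{6}\mapsto P_3$, and a short expansion in terms of $S=\sum_{i\ne j}x_i^2x_j$ and $T=\sum_{i<j<l}x_ix_jx_l$ gives the reduced forms
$$\tilde f\ess{n}{0}=(n-2)\,S-6\,T,\qquad \tilde f\ess{n}{k}=k(k-1)\,P_3-2(k-1)\,S+6\,T\quad(1\le k\le n-1).$$
Because each $f\ess{n}{k}$ is even, $f\ess{n}{k}(X)\succeq 0$ is equivalent to $\tilde f\ess{n}{k}\ge 0$ on the orthant, so I would prove part~(1) at the level of these cubics.

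For part~(1) I would first evaluate on the ``$j$ ones'' points ($j$ coordinates equal to $1$, the rest $0$), where $P_1=P_2=P_3=j$ and the value collapses to $j(j-k)(j-k-1)$ for $k\ge 1$ and to $-j(j-1)(j-n)$ for $k=0$. Both are $\ge 0$ on $0\le j\le n$: in the first case $j$ multiplies two consecutive integer factors, and in the second $-j(j-1)(j-n)\ge 0$ because $(j-1)(j-n)\le 0$ there; moreover they vanish exactly at $j\in\{k,k+1\}$ (resp.\ $j\in\{0,1,n\}$), which is what makes these forms extremal. To pass from the distinguished points to the whole orthant I would invoke the classical symmetric test-set reduction for even symmetric sextics (a half-degree principle): nonnegativity of a symmetric cubic on the orthant need only be checked at points whose coordinates take at most two distinct values. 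By homogeneity this leaves, for each admissible pair of multiplicities, a single univariate inequality to verify; alternatively, for $k=0$ the bound $(n-2)\,S\ge 6\,T$ drops out of AM--GM applied termwise over the $\binom{n}{3}$ triples.

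Parts~(2), (4) and~(5) are \emph{possibility} statements, handled by exhibiting squares. The cleanest is $k=1$: there $\tilde f\ess{n}{1}=6\,T$, so $f\ess{n}{1}=6\sum_{i<j<l}(X_iX_jX_l)^2$ is visibly SOS. For $f\ess{n}{0}$ one similarly seeks an explicit square decomposition, most safely produced as a rational positive semidefinite Gram matrix (or cited from \citeP{ESS}). For (4) and (5) the obstruction to being SOS---the double zeros at the all-ones configuration---is precisely what the stated multiplier clears, and I would obtain the explicit SOS for $M\ess{3}{2}f\ess{3}{2}$ (Robinson's form, citable), for $M\ess{4}{2}^2 f\ess{4}{2}$, and for the uniform family $M\ess{n}{2}f\ess{n}{n-1}$ by computing a positive semidefinite Gram matrix numerically and rationalizing it---exactly the ``feasible'' counterpart of the machinery of Section~\ref{sec::polys}.

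The genuine obstacle is part~(3): that $f\ess{n}{2},\dots,f\ess{n}{n-1}$ are nonnegative but \emph{not} SOS, uniformly in $n$ and~$k$. Here I would apply the $e=0$ case of Theorem~\ref{th::main2} (the separating-hyperplane remark following it): it suffices to construct, for each such $(n,k)$, a rational linear form $L_{\hat y}$ that is nonnegative on all relevant squares yet satisfies $L_{\hat y}(f\ess{n}{k})<0$. The vanishing of $\tilde f\ess{n}{k}$ at the test configurations $j=k,k+1$ is what guarantees such a functional and pins down its shape, as a suitable combination of moment (point-evaluation) functionals concentrated near those configurations. The hard part is turning this into a \emph{single} certificate valid for the whole range $2\le k\le n-1$ and all $n$, rather than one $(n,k)$ at a time via the numerical-to-rational pipeline of Algorithm~\ref{alg}; the fallback, if a uniform functional proves elusive, is to quote the extremality classification of even symmetric sextics in \citeP{ESS}, where these forms are shown to lie on extreme rays of the nonnegative cone outside the SOS cone.
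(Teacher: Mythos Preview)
The paper does not prove this proposition at all: it is stated inside Example~\ref{ex:ess} as a summary of results from \citeP{ESS} (with one citation to \citep{SPDNSOSRobinson} for the Robinson form), and no argument is supplied. So there is no ``paper's own proof'' to compare against; what you have written is an independent sketch that goes well beyond what the paper does.

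Your reductions are correct and are exactly the ones used in \citeP{ESS}: the even substitution $x_i=X_i^2$, the identities $P_1P_2=P_3+S$ and $P_1^3=P_3+3S+6T$, the resulting forms $\tilde f\ess{n}{0}=(n-2)S-6T$ and $\tilde f\ess{n}{k}=k(k-1)P_3-2(k-1)S+6T$, the values $j(j-k)(j-k-1)$ and $-j(j-1)(j-n)$ at the ``$j$ ones'' points, and the explicit SOS $f\ess{n}{1}=6\sum_{i<j<l}(X_iX_jX_l)^2$ are all right. The test-set reduction you invoke (symmetric forms need only be checked at points with at most two distinct coordinate values) is also the tool \citeP{ESS} uses.

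Where your proposal remains programmatic rather than a proof is precisely on the items that are \emph{uniform in $n$}. For part~(1) you reduce to one-parameter families of univariate inequalities but do not verify them. For part~(3) and part~(5), the separating-hyperplane route via Theorem~\ref{th::main2} or the numerical Gram-matrix route via Algorithm~\ref{alg} can certify only a single $(n,k)$ at a time; neither yields the statement for all $n\ge 3$ (resp.\ $n\ge 4$). You recognise this yourself and fall back on citing \citeP{ESS}, which is exactly the status the proposition has in the present paper. If you want a self-contained argument for~(3), the right object is not a numerical $\hat y$ but the explicit ``biform'' functional used in \citeP{ESS}, which separates these extreme forms from the SOS cone for every $(n,k)$ simultaneously; likewise~(5) requires an explicit, $n$-parametric Gram matrix rather than a rationalised numerical one.
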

For $n\ge 4$ and $2\le i\le n-2$,  we wish to know whether
$M\ess{n}{2}\cdot f\ess{n}{i}$ is an  SOS. We have the following
results.
\begin{enumerate}[Ex.\ref{ex:ess}.1]

\item For $n=4$, we can certify that the polynomial
\fbox{$f\ess{4}{2} \notin \RSOSdeg{2}$.} 
By exploiting the
sparsity, in (\ref{primal}), $W^{[1]}$ has dimension $55\times 55$
and $W^{[2]}$ has dimension $5\times 5$. We have $m=369$ in
(\ref{dual}).

\item For $n=5$, we can certify the following:
\begin{equation*}
\fbox{$\displaystyle f\ess{5}{2}\notin \RSOSdeg{2}\text{ and }f\ess{5}{3} \notin \RSOSdeg{4}.$}
\end{equation*}
By exploiting the
sparsity, for $f\ess{5}{2}$, $W^{[1]}, W^{[2]}$ have dimension $105\times
105, 6\times 6$, respectively and $m=1036$. For $f\ess{5}{3}$,
$W^{[1]}, W^{[2]}$ have dimension $231\times
231, 21\times 21$, respectively and $m=2751$.

\item For $n=6$, we can certify the following:
\begin{equation*}
\fbox{$\displaystyle f\ess{6}{2}\notin \RSOSdeg{2}\text{ and }f\ess{6}{3},
f\ess{6}{4} \notin \RSOSdeg{4}.$}
\end{equation*}
By exploiting the
sparsity, for $f\ess{6}{2}$, $W^{[1]}, W^{[2]}$ have dimension $182\times
182, 7\times 7$, respectively and $m=2541$. For $f\ess{6}{3}$ and $f\ess{6}{4}$,
$W^{[1]}, W^{[2]}$ have dimension $434\times
434, 28\times 28$, respectively and $m=7546$.
\end{enumerate}

\end{example}

\begin{example}\label{ex::illposed}
Consider the polynomial
$f(X_1,X_2)=X_1^2+X_2^2-2X_1X_2=(X_1-X_2)^2$. 
Its minimum is $0$.  
 However, for any small perturbation 
$\varepsilon>0$, the  polynomial
$f_{\varepsilon}(X_1,X_2)=(1-{\varepsilon}^2)X_1^2+X_2^2-2X_1X_2$
is not an SOS. Indeed, 
$f_{\varepsilon}(C,C)=-{\varepsilon}^2C^2$ which implies that the
 infimum of $f_{\varepsilon}$ is  $-\infty$.
Hence $f$ is an {\itshape ill-posed} polynomial \citeP{HKZ10}. For
$\varepsilon=10^{-1},\ldots,10^{-5}$, we can use Matlab SDP solver
{SeDuMi} in Step \ref{getiter} in Algorithm \ref{alg} to certify
that $f_{\varepsilon}$ is not SOS.
But for $\varepsilon<10^{-5}$, 
Step \ref{getiter} does not work out and we are not able to obtain
a rational  solution at which $s_k<0$. If we use the command
$\mathsf{findsos}$ in SOSTOOLS \citeP{sostools}, it outputs a
wrong SOS decomposition.  Our method implemented in {\itshape
SDPTools} in Maple can give exact certificates for
$f_{\varepsilon}$ being not an SOS for $\varepsilon=10^{-8}$ or
smaller! 
Take $\varepsilon=10^{-8}$ for instance. By exploiting the
sparsity, we have 
$\mathcal{G}_{f_{\varepsilon},\deg\le 0}=\{X_1,X_2\}$.
Setting $Digits=45$ in Maple, the certificate we obtained is
\begin{equation*}
\begin{aligned}
&\hat{y}=\left(\hat{y}_{2,0}=\frac{46635362642387337096986}{1731626131338905851065},
\hat{y}_{1,1}=\frac{53470001073377890290267}{1985404333861113854675},\right. \\
&\quad\quad\left.
\hat{y}_{0,2}=\frac{19926414238854847715525}{739891310902398542446}\right).
\end{aligned}
\end{equation*}
For any $u\in\RR[X]$ with 
$\text{supp}(u)\in \mathcal{G}_{f_{\varepsilon},\deg\le 0}$,
we have
\begin{equation*}
\begin{aligned}
L_{\hat{y}}(u^2)&=\frac{46635362642387337096986}{1731626131338905851065}u_{1,0}^2
+\frac{19926414238854847715525}{739891310902398542446}u_{0,1}^2\\
&\quad +2\times \frac{53470001073377890290267}{1985404333861113854675}u_{1,0}u_{0,1}\\
&\ge |2u_{1,0}u_{0,1}|\left(\left(\frac{46635362642387337096986}{1731626131338905851065}
\times \frac{19926414238854847715525}{739891310902398542446}\right)^{\frac{1}{2}}\right. \\
&\quad\left.
-\frac{53470001073377890290267}{1985404333861113854675}\right)\ge 0. 
\end{aligned}
\end{equation*}
However,
\begin{equation*}
\begin{aligned}
L_{\hat{y}}(f_{\varepsilon})=&\frac{9999999999999999}{10000000000000000}\times
\frac{46635362642387337096986}{1731626131338905851065}+
\frac{19926414238854847715525}{739891310902398542446}\\
&-2\times\frac{53470001073377890290267}{1985404333861113854675}<0, 
\end{aligned}
\end{equation*}
which implies $f_{\varepsilon}$ is not SOS.
\end{example}

\begin{example}\label{ex::rf}
In this example, we consider some rational functions.
\begin{enumerate}[Ex.\ref{ex::rf}.1]
\item For Motzkin polynomial in Example \ref{ex::Motzkin}, 
we can certify that
\[
\frac{X_1^4X_2^2+X_1^2X_2^4+1-3X_1^2X_2^2}{X_1^2+1}\notin \RSOSdeg{2}.
\]
\item 
For the even symmetric sextics in Example \ref{ex:ess},
we can certify that
\[
\frac{f\ess{n}{2}}{M\ess{n}{2}}, \ldots,
\frac{f\ess{n}{n-1}}{M\ess{n}{2}}\notin \RSOSdeg{2}, \quad n=4,5,6.
\]
Those constitute our largest certificates. 
\end{enumerate}
The correctness of the above result is guaranteed by the following proposition.


\begin{prop}\label{prop::rf}
Let $f/g\in \RR(X)$ be a multivariate rational function
where $f,g\in \RR[X]$ with $\text{\upshape GCD}(f,g)=1$.
If $f,-f\notin \text{\upshape SOS}$ then $f/g\notin\RSOSdeg{\deg(g)}$.
\end{prop}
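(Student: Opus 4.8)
The plan is to argue by contradiction, combining the coprimality $\text{\upshape GCD}(f,g)=1$ with a degree count on the denominator. Suppose $f/g\in\RSOSdeg{\deg(g)}$. Unwinding the shorthand $\RSOSdeg{2e}=\RSOS{\calT{X}{e}}$ with $2e=\deg(g)$, this furnishes polynomials $u_i,v_j\in\RR[X]$ with $\text{supp}(v_j)\subseteq\calT{X}{\deg(g)/2}$ and
\[
\frac{f}{g}=\frac{\sum_i u_i^2}{\sum_j v_j^2}.
\]
Set $P:=\sum_i u_i^2$ and $Q:=\sum_j v_j^2$. Both are sums of squares; $Q\neq 0$ since it is a genuine denominator, and the term restriction forces $\deg(Q)\le\deg(g)$ because each $v_j$ has degree at most $\deg(g)/2$.

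First I would clear denominators: the displayed identity becomes the polynomial equation $fQ=gP$ in $\RR[X]$. Since $\RR[X]$ is a unique factorization domain and $\text{\upshape GCD}(f,g)=1$, from $g\mid fQ$ I obtain $g\mid Q$, say $Q=g\,h$ with $h\in\RR[X]$. Substituting and cancelling the nonzero factor $g$ in the integral domain $\RR[X]$ leaves $P=f\,h$. The degree count then pins down $h$: from $\deg(Q)=\deg(g)+\deg(h)$ and $\deg(Q)\le\deg(g)$ I get $\deg(h)\le 0$, while $Q\neq 0$ forces $h\neq 0$; hence $h=c$ is a nonzero real constant and $P=c\,f$.

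It remains to read off the contradiction from the sign of $c$. As $P=c\,f$ is a sum of squares, if $c>0$ then $f=\sum_i(u_i/\sqrt{c})^2$ is a sum of squares, contradicting $f\notin\text{\upshape SOS}$; if $c<0$ then $-f=\sum_i(u_i/\sqrt{-c})^2$ is a sum of squares, contradicting $-f\notin\text{\upshape SOS}$. Either way the hypothesis is violated, so no such representation exists and $f/g\notin\RSOSdeg{\deg(g)}$. The argument has no deep obstacle; the only points needing care are the implication $g\mid fQ\Rightarrow g\mid Q$, which rests squarely on unique factorization together with $\text{\upshape GCD}(f,g)=1$, and the bound $\deg(Q)\le\deg(g)$ extracted from the term restriction defining $\RSOSdeg{\deg(g)}$---both routine, making the whole proof a clean exercise in coprimality and degree bookkeeping.
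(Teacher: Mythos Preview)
Your proof is correct and follows essentially the same line as the paper's: assume a representation, use $\text{GCD}(f,g)=1$ together with the degree bound $\deg(Q)\le\deg(g)$ to force $Q=c\,g$ (equivalently $g=c^{-1}Q$) and hence $P=c\,f$, then split on the sign of $c$ to contradict $f\notin\text{SOS}$ or $-f\notin\text{SOS}$. If anything, your write-up is more explicit than the paper's, which compresses the divisibility and degree steps into the phrase ``the right-side constitutes the reduced fraction $f/g$.''
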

\begin{proof}
Assume the contrary, namely that
\begin{equation}\label{eq:sosfrac}
   \frac{f}{g}=\frac{\sum_{i=1}^{l}u_i(X)^2}{\sum_{j=1}^{l'}v_j(X)^2},
   \quad u_i(X),v_j(X)\in\RR[X],\quad \deg(v_j)\le \deg(g)/2.
\end{equation}
Thus the right-side of (\ref{eq:sosfrac}) constitutes the
reduced fraction $f/g$,
which means $g(X)=c\sum_j^{l'}v_j(X)^2$
for some non-zero constant $c\in\RR$, making $f/c = \sum_{i=1}^{l}u_i(X)^2$,
a contradiction. 
\end{proof}

Furthermore, for the polynomials in Example \ref{ex:ess},
we can compute the certificates for the following result:
\begin{equation}\label{eq:choirf}
\frac{f\ess{n}{2}}{M\ess{n}{2}}\notin\RSOSdeg{4}, \quad n=4, 5, 6
\quad \text{and} \quad
\frac{f\ess{5}{3}}{M\ess{5}{2}}\notin\RSOSdeg{6}.
\end{equation}
We have no generalization of Proposition~\ref{prop::rf} to $\pm f\notin\RSOSdeg{2e}$,
and the impossibilities (\ref{eq:choirf}) may hint of new unknown properties
of the even symmetric sextics in \citeP{ESS}.
\end{example}

 \section*{Acknowledgments}
We thank Bruce Reznick for kindly providing us the even symmetric sextics
in Example \ref{ex:ess}.

\fi

\def\refname{\Large\bfseries References}

\bibliographystyle{myplainnat}
\bibliography{%
strings%
,kaltofen
,fguo
,certifiNotRatSOS
,crossrefs%
}

\end{document}